\theoremstyle{plain}
\newtheorem{theorem}{Theorem}[section]
\newtheorem{lemma}{Lemma}[section]
\newtheorem{corollary}{Corollary}[section]
\newtheorem{proposition}{Proposition}[section]
\newtheorem*{claim*}{Claim}
\newtheorem*{lemma*}{Lemma}
\newtheorem*{theorem*}{Theorem}
\theoremstyle{definition}
\newtheorem{definition}{Definition}[section]
\theoremstyle{remark}
\DeclareMathOperator{\Hessian}{Hess}
\DeclareMathOperator{\support}{supp}
\DeclareMathOperator{\divergence}{div}
\DeclareMathOperator{\Dom}{Dom}
\renewcommand{\Im}{\operatorname{Im}}
\renewcommand{\Re}{\operatorname{Re}}
\renewcommand{\i}{\operatorname{\sqrt{-1}}}
\begin{document}
	
	\title{The Diederich--Forn\ae ss index and the regularities on the $\bar{\partial}$-Neumann problem}
	
	\author{
		Bingyuan Liu\\
		bl016@uark.edu
	}
	
	\date{\today}

	\maketitle
	
	\begin{abstract}
		We show, under an assumption on the weakly pseudoconvex points, the trivial Diederich--Forn\ae ss index directly implies the global regularities of the $\bar{\partial}$-Neumann operators. 
	\end{abstract}
	
	\section{Introduction}
	The $\bar{\partial}$ equation $\bar{\partial}u=f$,  also known as ``the inhomogeneous Cauchy--Riemann equation'' is the most fundamental equation in the field of complex analysis. For the case of one complex variable, it has been well-understood for more than a century. Indeed, the explicit solution can be written by the Cauchy integral formula. But in terms of several complex variables, the satisfactory answer was not available until the 1960s. Around 1965, H\"{o}rmander \cite{Ho65} (see Andreotti--Vesentini \cite{AV61}, \cite{AV65} as well) showed that on bounded pseudoconvex domains, the $\bar{\partial}$ equation is always solvable for a $\bar{\partial}$-closed $f$. Moreover, the solution $u$ is smooth if $f$ is smooth (see Theorem 4.5.1 in Chen--Shaw \cite{CS01}). The result was obtained through a $L^2$ estimate: the Morrey--Kohn--H\"{o}rmander basic estimate. 
	
	As a byproduct, the Morrey--Kohn--H\"{o}rmander basic estimate also shows the Poisson's equation $\Box u=f$, with a natural but uncommon boundary condition, has a solution in the sense of distribution. Indeed, Garabedian--Spencer \cite{GS52} suggested studying this problem as a generalization of Hodge theory from compact manifolds to manifolds with boundaries. This problem is nowadays called the $\bar{\partial}$-Neumann problem and is the root of the modern theory of several complex variables. 
	
	The $\bar{\partial}$-Neumann problem has an unmanageable boundary condition. This boundary condition makes the equation not elliptic. Consequently, the regularities of the solution become obscure. The complete answer (i.e., the necessary and sufficient condition for regularities) is still lacking nowadays since the 1960s. We are going to highlight some remarkable results of regularities in the following paragraphs.
	
	In the 1960s, Kohn  \cite{Ko63}, \cite{Ko64} shows the global regularities are always satisfied if the domain is assumed to be strongly pseudoconvex. Indeed, he showed the following Sobolev estimates holds: \[\|u\|_{W^{k+1}}\lesssim\|\Box u\|_{W^k}.\] That is, $u$  gains $1$ more derivative than $\Box u$. Later Kohn's estimate has been improved to the following subelliptic estimates by Catlin \cite{Ca87}, \cite{Ca83} and D'Angelo \cite{DA79}, \cite{DA82}: \[\|u\|_{W^{k+\epsilon}}\lesssim\|\Box u\|_{W^k},\] where $\epsilon>0$ is a small positive number. This estimate holds for bounded pseudoconvex domains of finite type. The pseudoconvex domains of finite type is a generalization of the strongly pseudoconvex domains.
	
	During that period, people believed the global regularities hold for all bounded pseudoconvex domains. Hence, a study of pseudoconvex domains of infinite type is necessary. In the 1990s, Boas--Straube \cite{BS90}, \cite{BS91}, \cite{BS91b}, \cite{BS93} has found a series of conditions for domains of infinite type (see Harrington \cite{Ha11}, Straube \cite{St10} as well). Under these conditions, the global regularities are satisfied (see Chen \cite{Ch91} as well). On the other hand, almost at the same time, Barrett \cite{Ba92} showed the Sobolev estimates does not hold for a  family of bounded pseudoconvex domains. Later, Christ \cite{Ch96} showed on these domains, the inverse of $\Box$ fails to preserve the smoothness. These domains are known as $\beta$-worm domains constructed by Diederich--Forn\ae ss \cite{DF77a} and clearly they do not satisfy the conditions of Boas--Straube. This discovery breaks the beliefs about the global regularities of the $\bar{\partial}$-Neumann problem.
	
	From another point of view, the ``Diederich--Forn\ae ss index'' has a root in pluripotential theory. The original motivation was to seek for a bounded plurisubharmonic function on each bounded pseudoconvex domains. In 1977, Diederich--Forn\ae ss \cite{DF77b} were able to construct the bounded plurisubharmonic functions. Indeed, they proved for all bounded pseudoconvex domains with smooth boundary, there exists a $\tau_\rho\in(0,1]$ and a defining function $\rho$ so that $-(-\rho)^{\eta_\rho}$ is bounded and plurisubharmonic. Diederich--Forn\ae ss \cite{DF77a} also showed the $\tau_\rho$ may vary for different domains. For example, $\tau_\rho$ could be equal to 1 on strongly pseudoconvex domains while it has to be close to 0 on $\beta$-worm domains. This leads a definition of  Diederich--Forn\ae ss index (see Chen--Fu \cite{CF11}). Associated to each bounded pseudoconvex domain with smooth boundary, the real number 
	\[\eta:=\sup \tau_\rho ,\] where the supremum is taken over all defining functions of $\Omega$, is called the Diederich-Forn\ae ss index of the domain $\Omega$. A domain is said to be of trivial index if its Diederich--Forn\ae ss index is equal to 1. Otherwise, it is said to be of nontrivial index. The $\beta$-worm domains are of nontrivial index (see Diederich--Forn\ae ss \cite{DF77a} and Liu \cite{Li17}) and the strongly pseudoconvex domains are of trivial index. The Diederich--Forn\ae ss index has been intensely studied by Adachi \cite{Ad15}, Adachi--Brinkschulte \cite{AB14}, Harrington \cite{Ha17}, Abdulsahib--Harrington \cite{AH18}, Krantz--Liu--Peloso \cite{KLP16},  and Liu \cite{Li16}, \cite{Li17},\cite{Li17b}. 
	
	Looking at both global regularities and the Diederich--Forn\ae ss index, there are many pieces of evidence indicating the global regularities and trivial index happens simultaneously (compare \cite{BS91} \cite{BS93} with \cite{Li17} \cite{Li17b}), it is of interest to find a direct relation between the Diederich--Forn\ae ss index and the regularities. However, it turns out this problem is rather difficult. In 1999, Kohn \cite{Ko99} showed if a defining function satisfies certain properties of plurisubharmonicity, then the domain satisfies global regularities. His result was improved by Pinton--Zampieri \cite{PZ14} in 2014. In a similar fashion, Harrington \cite{Ha11} was able to give a concrete condition under which the global regularities are satisfied. All these conditions rely on the existence of certain special defining functions. In other words, these conditions give some indirect connection between the index and the regularities. After all, the Diederich--Forn\ae ss index is independent of defining functions. In contrast to this, Berndtsson--Charpentier \cite{BC00} in 2000 gave a direct relation. They showed that the Sobolev estimates of order $s>0$ holds if the Diederich--Forn\ae ss index is $2s$. Their result is very useful for a Sobolev estimate of the order up to $\frac{1}{2}$.  However, this regularity estimate is far from the global regularity estimates (where the order goes to infinity). Hence, a result directly connecting the trivial index and the global regularities is necessary.
	
	In this paper, we connect the trivial index and the global regularities under a mild assumption. As far as we know, it is the first time that a direct connection between the trivial index and global regularities is discovered. 
	
	Since it is classical that strongly pseudoconvex domains are both of trivial index and of global regularity, our task focuses on weakly pseudoconvex domains. Our main theorem is as follows.
	
	\begin{theorem}\label{mainthm}
		Let $\Omega$ be a pseudoconvex domain in $\mathbb{C}^n$ with smooth boundary and satisfy maximal estimates. Suppose the set $\Sigma$ of weakly pseudoconvex points is contained in a submanifold (of $\partial\Omega$) which is perpendicular to $L_n-\overline{L}_n$. If $\Omega$ is of the trivial index, then the inverse operator $N_q$ of $\Box_q$  ($q>1$) maps $W^k$ continuously to $W^k$ for all non-negative integer $k\geq 0$.
	\end{theorem}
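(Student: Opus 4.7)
The plan is to combine three ingredients: the Boas--Straube transverse vector-field method, the quantitative ``near-plurisubharmonicity'' provided by the trivial Diederich--Forn\ae ss index, and the full complex-tangential control supplied by the maximal estimates. The overall structure mirrors the proof of global regularity on domains admitting a vector field approximately commuting with $\bar{\partial}$, but the existence of such a field now comes essentially for free from $\eta=1$ together with the geometric condition on $\Sigma$.

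First, I would convert the hypothesis $\eta=1$ into a one-parameter family of pointwise Hessian inequalities. For each $\tau\in(0,1)$, pick a defining function $\rho_\tau$ so that $-(-\rho_\tau)^{\tau}$ is plurisubharmonic on a one-sided collar of $\partial\Omega$. Computing $i\partial\bar{\partial}$ of this function and clearing the factor $(-\rho_\tau)^{\tau-2}$ yields, for every $(1,0)$-vector $L$, an inequality of the form $i\partial\bar{\partial}\rho_\tau(L,\overline{L})\geq (1-\tau)\,|\langle\partial\rho_\tau,L\rangle|^{2}/(-\rho_\tau)$ up to boundary-vanishing terms. The Levi form of $\rho_\tau$ therefore dominates an $O(1-\tau)$ multiple of the squared normal component of $L$, an estimate that becomes arbitrarily favorable as $\tau\nearrow 1$.

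Second, I would build a real vector field $T$ on a neighborhood of $\partial\Omega$ that (a)~is tangent to $\partial\Omega$, (b)~is transverse to $\Sigma$, and (c)~has commutators with the anti-complex-tangential fields $\overline{L}_{1},\dots,\overline{L}_{n-1}$ whose components along the complex normal direction are small enough to be absorbed by the Levi form inequality above. The hypothesis that $\Sigma$ is contained in a submanifold perpendicular to $L_{n}-\overline{L}_{n}$ is exactly what makes (a)--(c) achievable simultaneously: a suitable real multiple of $i(L_{n}-\overline{L}_{n})$ is automatically transverse to that submanifold, hence transverse to $\Sigma$, so one can start from this vector field near $\Sigma$ and extend it by a partition of unity over $\partial\Omega$. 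Away from $\Sigma$ the strict positivity of the Levi form absorbs the commutator errors, while on $\Sigma$ these errors are tamed by $1-\tau$ from the first step.

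Third, an induction on $k$ yields the Sobolev estimate. After elliptic regularization, tangential derivatives of order $k+1$ of $u=N_{q}f$ are decomposed into derivatives in the $T$-direction and those in complex tangential directions; the latter are controlled by the maximal estimates applied after commutation, while the former are controlled by the commutator structure of $T$. The dangerous contributions $[T^{k+1},\bar{\partial}]$ and $[T^{k+1},\bar{\partial}^{*}]$ acting on $u$ are rewritten, via the Boas--Straube scheme, as lower-order tangential derivatives with coefficients of size $O(1-\tau)$, which are absorbed through the induction hypothesis by taking $\tau$ sufficiently close to $1$. Normal derivatives are then recovered from $\Box_{q}u=f$ by the standard non-characteristic argument.

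The main obstacle will be the construction of $T$ near $\Sigma$ together with the precise commutator calculation confirming that the resulting errors fit into the $(1-\tau)$ window supplied by the trivial index. The $\beta$-worm domains illustrate what can go wrong: there $\Sigma$ is an annulus around which the integral curves of $i(L_{n}-\overline{L}_{n})$ wind, so no smooth $T$ of the required type exists globally; the perpendicularity hypothesis is exactly what rules out this winding obstruction. Verifying that the hypothesis is used \emph{only} for this construction, and that the remaining steps rest solely on the maximal estimates and the commutator structure of $T$, is the technical core of the proof.
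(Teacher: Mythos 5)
Your overall architecture --- a vector field proportional to $L_n-\overline{L}_n$, maximal estimates for the complex-tangential directions, induction on the Sobolev order, and absorption of commutator errors through the trivial index --- does match the paper's skeleton, but there is a genuine gap in the step that matters most: how the trivial index is converted into commutator control of $T$. You propose to read off a $(1-\tau)$-smallness from the Hessian of $\rho_\tau$ and then to claim that $[T^{k+1},\bar\partial]$ and $[T^{k+1},\bar\partial^*]$ produce coefficients of size $O(1-\tau)$, but you never explain how $\tau$ enters the construction of $T$; a ``suitable real multiple of $i(L_n-\overline{L}_n)$'' has commutators determined by geometry, not by the Diederich--Forn\ae ss exponent. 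The paper's mechanism is specific and essential here. It replaces $\rho_\tau$ with Liu's boundary characterization (Theorem \ref{pre}): for every $\sigma>0$ there is a $\phi$ on a neighborhood of $\Sigma$ in $\partial\Omega$ satisfying $\sigma\,|\overline{L}\phi-2g(\nabla_{L_n}L_n,L)|^2-\Hessian_\phi(L,L)+2g(\nabla_L\nabla_{L_n}L_n,L)\le 0$ on $\Sigma$. After modification and an extension $\psi$ normalized by $(L_n-\overline{L}_n)\psi=0$ on $\Sigma$ (this is precisely where the perpendicularity hypothesis is used), the vector field is taken to be $T=e^{\psi}(L_n-\overline{L}_n)$ --- the exponential weight is not incidental. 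The commutators $[\nabla^c_T,\bar\partial]$, $[\nabla^c_T,\bar\partial^*]$ then produce, to leading order, the \emph{same} quantities $\overline{L}_j\psi-2g(\nabla_{L_n}L_n,L_j)$ that appear with the large factor $\sigma$ in Liu's inequality. The absorption is a ``same term appears on both sides with different constants'' argument: $\sigma$ may be chosen arbitrarily large (with a $k$-dependent threshold), while the constants $C$ and $M$ in the maximal-estimate side are shown to be independent of $\psi$. None of this is a $(1-\tau)\to 0$ shrinkage argument, and without identifying the matching quantity on both sides your induction does not close.

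A secondary but telling issue: your remark about the worm domains is backwards. The paper explicitly notes that the $\beta$-worm domains \emph{do} satisfy the perpendicularity hypothesis; what fails for them is the trivial-index hypothesis, which is why the theorem does not conclude global regularity there. The perpendicularity condition is not designed to exclude the winding obstruction --- it is what allows one to normalize the extension $\psi$ so that $(L_n-\overline{L}_n)\psi=0$ on $\Sigma$, which in turn is what makes the $\Hessian_\psi$ appearing in Liu's inequality agree with the boundary-Hessian of $\tilde\phi$ (Lemma \ref{properties}). Finally, a minor sign point: plurisubharmonicity of $-(-\rho_\tau)^\tau$ gives $\Hessian_{\rho_\tau}(L,\overline L)\ge -\tfrac{1-\tau}{-\rho_\tau}|\langle\partial\rho_\tau,L\rangle|^2$, a \emph{lower} bound by a negative quantity, not the positive lower bound you wrote.
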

	
	In this paper, we cannot remove the condition ``the set $\Sigma$ is contained in a submanifold perpendicular to $L_n-\overline{L}_n$'' due to the technique we have used in this paper. However, we find there is a large class of bounded pseudoconvex domains satisfying our condition including the $\beta$-worm domains.
	
	It is well-known that all the Sobolev estimates imply the global regularities and the condition R. As a corollary, we obtain the following.
	
	\begin{corollary}\label{maincor}
		Let $\Omega$ be a pseudoconvex domain in $\mathbb{C}^n$ with smooth boundary and satisfy maximal estimates. Suppose the set $\Sigma$ of weakly pseudoconvex points is contained in a submanifold (of $\partial\Omega$) which is perpendicular to $L_n-\overline{L}_n$. If $\Omega$ is of the trivial index,  then the $\Omega$ satisfies global regularity and condition R.
	\end{corollary}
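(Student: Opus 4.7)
The plan is to obtain the corollary as a direct functional-analytic consequence of Theorem \ref{mainthm}, using only standard facts about how Sobolev continuity at every order upgrades to $C^\infty$ regularity and to condition R.

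First, I would invoke Theorem \ref{mainthm} to conclude that for every $q>1$ and every integer $k\geq 0$, the operator $N_q$ maps $W^k_{(0,q)}(\Omega)$ continuously into $W^k_{(0,q)}(\Omega)$. Since $\Omega$ is bounded with smooth boundary, Sobolev embedding gives the standard identification $C^\infty_{(0,q)}(\overline{\Omega})=\bigcap_{k\geq 0}W^k_{(0,q)}(\Omega)$, together with the fact that continuity at every Sobolev level, combined with uniform boundedness on intersections, forces $N_q$ to map smooth forms up to the boundary to themselves. This is precisely global regularity of $N_q$ for each $q>1$.

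Next, to obtain condition R (Bell's condition), I would pass from the Neumann operator on $(0,q)$-forms to the Bergman projection $B$ on functions. The classical route is Kohn's formula $B=I-\bar\partial^{\ast}N_1\bar\partial$, which reduces condition R to Sobolev continuity of $N_1$. The ingredient bridging Theorem \ref{mainthm} (which only addresses $q>1$) to this $q=1$ statement is the Boas--Straube propagation principle: Sobolev continuity of the $\bar\partial$-Neumann operator on forms of one degree, at every order, propagates to all other form degrees, in particular to $q=1$. Applying this propagation, then Kohn's formula, yields that $B$ is continuous on $W^k(\Omega)$ for every $k\geq 0$, hence preserves $C^\infty(\overline{\Omega})$, which is condition R.

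Thus the proof is essentially a citation: Theorem \ref{mainthm} provides the difficult input (uniform Sobolev continuity of $N_q$ for $q>1$), and the remaining steps are the well-known passage from all-order Sobolev continuity to global $C^\infty$ regularity together with the Boas--Straube transfer plus Kohn's formula for condition R. The only point that demands care is ensuring that the transfer from $q>1$ to $q=1$ is valid under our hypotheses; since the maximal estimate assumption and pseudoconvexity needed for Boas--Straube's transfer argument are built into the statement of Theorem \ref{mainthm}, no additional work is required here. Consequently, the corollary is obstruction-free once the main theorem is granted.
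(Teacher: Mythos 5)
Your proposal matches the paper's approach in spirit---the paper's own justification is the single sentence ``It is well-known that all the Sobolev estimates imply the global regularities and the condition R''---so you are right that the corollary is intended as an immediate citation of Theorem~\ref{mainthm}. You are also right, and more careful than the paper, to flag the mismatch between the theorem's ``$q>1$'' and the $q=1$ statement that condition~R actually needs (via Kohn's formula $P_0=I-\bar{\partial}^*N_1\bar{\partial}$). However, the patch you propose does not close that gap. The Boas--Straube equivalences tie exact regularity of $N_q$ to exact regularity of the Bergman projections $P_{q-1}$, $P_q$, $P_{q+1}$; starting from $N_q$ with $q>1$ therefore yields only $P_q$ for $q\geq 1$, and $P_0$---hence $N_1$---is never reached. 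Exact regularity does not percolate \emph{downward} in the $\bar{\partial}$-complex, so the transfer you invoke breaks precisely at the step that matters.

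The actual resolution is textual rather than functional-analytic. In Section~4 the estimate is set up for $u\in\Dom(\Box)\cap C^\infty_{(0,q)}(\overline{\Omega})$ with $1\leq q\leq n$, and nothing in Sections~4--6 uses $q>1$; the restriction ``$q>1$'' in the statement of Theorem~\ref{mainthm} is evidently a slip for ``$q\geq 1$''. With $N_1$ included, your argument needs no propagation at all: $W^k$-to-$W^k$ boundedness of every $N_q$ for every $k\geq 0$ gives $N_q\bigl(C^\infty_{(0,q)}(\overline{\Omega})\bigr)\subset C^\infty_{(0,q)}(\overline{\Omega})$, which is global regularity, and the $q=1$ case combined with Kohn's formula gives $P_0$ bounded on $W^k$ for every $k$, which is condition~R. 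I would rewrite the last part of your proof to drop the percolation step and instead observe that the theorem's proof covers $q=1$.
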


	Theorem \ref{mainthm} and Corollary \ref{maincor} may be used to determine the global regularities for domains by its Diederich--Forn\ae ss index. This gives a new point of view for the globally regular domains.
	
	We also note that in $\mathbb{C}^2$, all bounded pseudoconvex domain with smooth boundary satisfies a maximal estimate (see Lemma 4.32 in Straube \cite{St10}). Thus, in the 2-dimensional case, the assumption of maximal estimates can be dropped and our theorem can be simplified as follows.
	\begin{theorem}\label{2ndthm}
		Let $\Omega$ be a pseudoconvex domain in $\mathbb{C}^2$ with smooth boundary. Suppose the set $\Sigma$ of weakly pseudoconvex points is contained in a submanifold (of $\partial\Omega$) which is perpendicular to $L_n-\overline{L}_n$. If $\Omega$ is of the trivial index, then the $\bar{\partial}$-Neumann operator is globally regular and satisfies condition R.
	\end{theorem}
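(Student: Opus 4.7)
The plan is to reduce Theorem~\ref{2ndthm} to Corollary~\ref{maincor}, so that the substantive analytic work of the paper (establishing Theorem~\ref{mainthm}) does all the heavy lifting. The statement of Theorem~\ref{2ndthm} differs from the two-dimensional specialization of Corollary~\ref{maincor} only in that the hypothesis of maximal estimates has been dropped. Consequently, the proof amounts to verifying that the maximal estimate hypothesis is automatic once we are in $\mathbb{C}^2$.

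First, I would invoke the classical fact that every bounded pseudoconvex domain in $\mathbb{C}^2$ with smooth boundary satisfies a maximal estimate; this is stated as Lemma~4.32 of Straube~\cite{St10} and is already noted in the paragraph immediately preceding the theorem. With this in hand, the remaining hypotheses of Corollary~\ref{maincor} (smooth boundary, pseudoconvexity, containment of the weakly pseudoconvex set $\Sigma$ in a submanifold of $\partial\Omega$ perpendicular to $L_n-\overline{L}_n$, and triviality of the Diederich--Forn\ae ss index) are exactly the hypotheses we have assumed in Theorem~\ref{2ndthm}.

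Second, I would simply apply Corollary~\ref{maincor}. Its conclusion delivers both global regularity of the $\bar{\partial}$-Neumann operator and condition~R on $\Omega$, which is precisely the conclusion of Theorem~\ref{2ndthm}. Since the two dimensional case offers no further simplifications to exploit and no additional obstructions to navigate, nothing more needs to be said.

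In short, there is essentially no obstacle at this stage: the only non-trivial point is the appeal to the Straube lemma to discharge the maximal-estimate hypothesis, and everything else is a citation of results already established in the paper. All of the genuine difficulty is concentrated in the proof of Theorem~\ref{mainthm} itself, not in this two dimensional corollary.
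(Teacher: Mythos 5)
Your proposal is exactly the paper's argument: the paragraph preceding Theorem~\ref{2ndthm} already points out that every bounded pseudoconvex domain in $\mathbb{C}^2$ with smooth boundary satisfies a maximal estimate (Lemma~4.32 of Straube), so the theorem follows immediately from Corollary~\ref{maincor} (equivalently, Theorem~\ref{mainthm}) by discharging that hypothesis. The reduction is correct and complete.
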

	
	Assume $\Sigma$ is a real curve. It is known the Diederich--Forn\ae ss index is equal to 1 for the case of $\mathbb{C}^2$ (see Theorem 4.5 of Liu \cite{Li17b}). Combining this fact with Theorem \ref{2ndthm}, one gets the following corollary.
	
	\begin{corollary}
		Let $\Omega$ be a pseudoconvex domain in $\mathbb{C}^2$ with smooth boundary. Suppose the set $\Sigma$ of weakly pseudoconvex points is a real curve which is perpendicular to $L_n-\overline{L}_n$. Then the $\bar{\partial}$-Neumann operator is globally regular and satisfies condition R.
	\end{corollary}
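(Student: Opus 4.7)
The plan is to derive the corollary as a direct consequence of Theorem~\ref{2ndthm} together with the dimension-two result of Liu \cite{Li17b}. The strategy has two short steps: verify the geometric hypothesis needed for Theorem~\ref{2ndthm}, and then supply the trivial Diederich--Forn\ae ss index from an external source, so that the mechanism of Theorem~\ref{2ndthm} can be invoked as a black box.

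First I would check that a real curve $\Sigma\subset\partial\Omega$ qualifies as a submanifold of $\partial\Omega$ in the sense required by Theorem~\ref{2ndthm}. A real curve is, by definition, a one-dimensional smooth submanifold, so the containment ``$\Sigma$ is contained in a submanifold'' is satisfied trivially by taking the submanifold to be $\Sigma$ itself. The perpendicularity condition relative to $L_n-\overline{L}_n$ is already part of the hypothesis, so nothing needs to be verified there. In $\mathbb{C}^2$ we have $n=2$, and $L_n-\overline{L}_n=L_2-\overline{L}_2$ is the usual totally real vector field tangent to $\partial\Omega$ that is normal to the complex tangential direction; so the orthogonality assumption is purely geometric and transfers verbatim.

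Second, I would invoke Theorem~4.5 of Liu \cite{Li17b}, which asserts that for a smooth bounded pseudoconvex domain in $\mathbb{C}^2$ whose weakly pseudoconvex set $\Sigma$ is a real curve, the Diederich--Forn\ae ss index equals $1$. This is precisely the trivial index hypothesis appearing in Theorem~\ref{2ndthm}. Having collected all hypotheses (smooth boundary, pseudoconvexity, $\Sigma$ a real curve perpendicular to $L_2-\overline{L}_2$, trivial index), I would conclude by directly applying Theorem~\ref{2ndthm} to obtain global regularity of the $\bar{\partial}$-Neumann operator and condition~R on $\Omega$.

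There is no real obstacle here: the corollary is essentially a packaging step combining the main theorem of the paper with a previously established computation of the index in dimension two. The only point that requires a moment's attention is to make sure the phrase ``real curve'' in the hypothesis matches the class of $\Sigma$ covered by Theorem~4.5 of \cite{Li17b}, which it does.
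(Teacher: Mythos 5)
Your proposal is correct and matches the paper's argument exactly: observe that the real curve $\Sigma$ is itself a submanifold perpendicular to $L_n-\overline{L}_n$, invoke Theorem~4.5 of Liu \cite{Li17b} to conclude the Diederich--Forn\ae ss index is $1$, and then apply Theorem~\ref{2ndthm}. No difference in approach.
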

	
	After preliminaries, we use Section \ref{thedf}-\ref{commutatorssection} to prepare the proof of Theorem \ref{mainthm}. Eventually, the proof of Theorem \ref{mainthm} is given in Section \ref{proof}.
	
	\section{Preliminary}\label{preliminary}
	Let $\Omega$ be a bounded pseudoconvex domain with smooth boundary in $\mathbb{C}^n$. Let $\Sigma$ denote the set of weakly pseudoconvex points. Let $\lbrace U_\alpha\rbrace_{\alpha=1}^\mathcal{M}$ be finite open sets of which the union covers $\Sigma$. For each $U_\alpha$, let $\lbrace \sqrt{2}L_j\rbrace_{j=1}^{n-1}$ be unit $(1, 0)$ tangential vector fields with smooth coefficients in $U_\alpha$ such that $\lbrace L_j|_{\partial\Omega}\rbrace_{j=1}^{n-1}$ are tangential to $\partial\Omega$. Let $L_n=(\sum|\frac{\partial\delta}{\partial z_i}|^2)^{-1}\sum \frac{\partial\delta}{\partial \bar{z}_i}\frac{\partial}{\partial z_i}$ be the $(1,0)$ complex normal vector, where $\delta$ denotes the signed distance function for $\Omega$. We can see that $L_n$ is globally defined which is independent from the coordinate charts. Let $u$ be a $(0, q)$-form for $1\leq q\leq n$. Let ${\sum_I}'$ denote the sum over all $q$-tuple $I$ with increasing indices. We have the following identities (see Harrington--Liu \cite{HL19}):
	
	\[u={\sum_{I}}'f_I\omega_{\overline{L}_I}=\frac{1}{2^q}{\sum_{I}}'g(u,\omega_{\overline{L}_I})\omega_{\overline{L}_I}\]
	
	\[\bar{\partial}u=\frac{1}{2^q}{\sum_{I}}' \sum_{i\notin I} g(\nabla_{\overline{L}_i}u,\omega_{\overline{L}_I})\omega_{\overline{L}_i}\wedge\omega_{\overline{L}_I},\]
	and
	\[\bar{\partial}^* u=-\frac{1}{2^{q-1}}{\sum_{I}}'\sum_{i\in I} g(\nabla_{L_i}u, \omega_{\overline{L}_I})\overline{L}_i\lrcorner\omega_{\overline{L}_I}.\]
	
	Here, $g$ is the standard Euclidean metric on $\mathbb{C}^n$.
	
	We observe that $u\in\Dom(\bar{\partial}^*)$ if and only if $g(u, \omega_{\overline{L}_J}\wedge\omega_{\overline{L}_n})=0$ on $\partial\Omega$, for all arbitrary $q-1$ tuple $J$ with increasing indexes.

	Moreover, we also want to remark that the following identities are useful:
	
	\[\nabla_{L_i}\omega_{\overline{L}_j}=2\sum_{k}g(\nabla_{L_i}L_j, L_k)\omega_{\overline{L}_k}\qquad\text{and}\qquad\nabla_{\overline{L}_i}\omega_{\overline{L}_j}=2\sum_{k}g(\nabla_{\overline{L}_i}L_j, L_k)\omega_{\overline{L}_k}.\]

	This paper is mainly on Sobolev estimates, i.e., the estimates on $W^k$ norms. The Sobolev norm of order $k\in\mathbb{N}$ is defined to be \[\|u\|^2_{W^k}=\sum_{j=0}^{k}\int_{\Omega}g(\nabla^ju,\nabla^ju)\,dV.\] We also denote the $L^2$ norm by $\|\cdot\|=\langle\cdot, \cdot\rangle$. Because of the definition, one may want to consider $\nabla u$ or $\nabla_{L_n-\overline{L}_n} u$. However, $\nabla_{L_n-\overline{L}_n} u$ is not in $\Dom(\bar{\partial}^*)$ and the Morrey--Kohn--H\"{o}rmander estimate does not apply. Hence, we use the classical method (see Straube \cite{St10} and Chen--Shaw \cite{CS01}) as follows. We define the following notation $\nabla^c$. 
	
	\begin{definition}
		Let $u=\frac{1}{2^q}\sum_{I}g(u,\omega_{\overline{L}_I})\omega_{\overline{L}_I}$. Then \[\nabla^c_{L_n-\overline{L}_n} u:=\frac{1}{2^q}\sum_{I}(L_n-\overline{L}_n)g(u,\omega_{\overline{L}_I})\omega_{\overline{L}_I}.\]
	\end{definition}
	In other words, $\nabla^c$ only acts on the coefficients of $\omega_{\overline{L}_I}$. One can see that $\nabla_{L_n-\overline{L}_n}^c u\in\Dom(\bar{\partial}^*)$ whenever $u\in\Dom(\bar{\partial}^*)$.

	For the following, we recall the maximal estimates of Derridj \cite{De78} (see also Straube \cite{St10}, \c{C}elik--\c{S}ahuto\u{g}lu--Straube \cite{CSS19} and Derridj \cite{De17}).
	
	We define the maximal estimate as follows.
	
	\begin{definition}
		Let $\Omega$ be a bounded pseudoconvex domain in $\mathbb{C}^n$ with smooth boundary and $p\in\partial\Omega$. Let $u\in\Dom(\bar{\partial}^*)$ supported in a neighborhood $V_p$ of $p\in\partial\Omega$. The following estimates:
		\[\sum_{i=1}^{n-1}\|\nabla_{L_i}u\|^2+\sum_{j=1}^n\|\nabla_{\overline{L}_i}u\|^2\lesssim \|\bar{\partial}u\|^2+\|\bar{\partial}^*u\|^2+\|u\|^2\] is referred to a maximal estimate at $p$.
	\end{definition}
	
	One can immediately obtain the following inequality which we will use further
	\[\sum_{i=1}^{n-1}\|\nabla_{L_i}u\|^2+\sum_{j=1}^n\|\nabla_{\overline{L}_i}u\|^2\lesssim \|\bar{\partial}u\|^2+\|\bar{\partial}^*u\|^2.\] This is because on bounded pseudoconvex domain with a smooth boundary $\|u\|^2\lesssim\|\bar{\partial}u\|^2+\|\bar{\partial}^*u\|^2$ by the basic estimate of Morrey--Kohn--H\"{o}rmander.
	
	Let us recall the necessary and sufficient condition of the maximal estimates given by Derridj \cite{De78}.
	
	\begin{theorem}[Derridj]
		Let $\Omega$ be a bounded pseudoconvex domain in $\mathbb{C}^n$ with a smooth boundary and $p\in\partial\Omega$. The maximal estimate holds near $p$ if and only if the eigenvalues of the Levi form are comparable in a neighborhood of $p$, $V_p$.
	\end{theorem}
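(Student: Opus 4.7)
The plan is to prove both implications of the equivalence, relying throughout on the Morrey--Kohn--Hörmander basic identity together with a decomposition of the commutator $[\overline{L}_i, L_i]$ into a Levi-form component along $T:=L_n-\overline{L}_n$ plus purely horizontal pieces tangent to the complex distribution.

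For sufficiency, I would begin from the basic identity, which expresses $\|\bar\partial u\|^2+\|\bar\partial^* u\|^2$ for $u\in\Dom(\bar\partial^*)$ as the sum of ${\sum_{I,j}}'\|\nabla_{\overline{L}_j}u_I\|^2$, a non-negative boundary term built from the Levi form, and lower-order error terms. This already controls the antiholomorphic half of the maximal estimate. To bring in the tangential $(1,0)$ derivatives $\nabla_{L_i}u$ for $i<n$, I would integrate by parts to write
\[
\|\nabla_{L_i}u\|^2 = \|\nabla_{\overline{L}_i}u\|^2 + \langle[\nabla_{\overline{L}_i},\nabla_{L_i}]u,u\rangle + (\text{boundary}) + (\text{lower order}),
\]
and then apply the commutator decomposition so that, after another integration by parts against $T$, the principal contribution becomes a boundary integral of $\lambda_i |u|^2$, where $\lambda_i$ is the diagonal entry of the Levi form in a frame diagonalizing it at $p$. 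The Morrey--Kohn--Hörmander boundary term for $(0,q)$-forms in such a frame is ${\sum_I}'\sum_{k\in I}\lambda_k|u_I|^2$; comparability of the eigenvalues means each $\lambda_i$ is bounded pointwise by a uniform multiple of the trace $\sum_k\lambda_k$, so the boundary integral $\int_{\partial\Omega}\lambda_i|u|^2$ is dominated by the boundary contribution already supplied by the basic identity. Summing over $i<n$ then closes the maximal estimate.

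For necessity I would argue contrapositively. If the eigenvalues are not comparable in any neighborhood of $p$, there are points $p_k\to p$ and indices $i_k, j_k$ with $\lambda_{i_k}(p_k)/\lambda_{j_k}(p_k)\to 0$. Around each $p_k$ I would construct rescaled localized test $(0,1)$-forms $u_k$ whose only nonvanishing coefficient lies in the $\omega_{\overline{L}_{j_k}}$ direction; tracking the scaling and using the commutator identity above in reverse, $\sum_i\|\nabla_{L_i}u_k\|^2$ inherits a contribution of size $\int\lambda_{j_k}|u_k|^2$, whereas $\|\bar\partial u_k\|^2+\|\bar\partial^* u_k\|^2$ only sees the diagonal entry $\lambda_{i_k}$ on the support of $u_k$. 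The ratio of the two sides then blows up along the sequence, contradicting the assumed maximal estimate.

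The main obstacle is the bookkeeping of commutators and boundary terms: $[\overline{L}_i,L_j]$ contributes not only the desired Levi-form piece along $T$ but also horizontal $(1,0)$ and $(0,1)$ pieces whose coefficients depend on the non-integrability of the frame. I expect that a careful use of the small constant--large constant trick, together with the basic estimate, will absorb those horizontal pieces back into $\|\nabla_{L_j}u\|$ and $\|\nabla_{\overline{L}_j}u\|$; the comparability hypothesis is then precisely what allows the remaining Levi-form boundary term to be dominated by the boundary term already present in the Morrey--Kohn--Hörmander identity, so that everything cancels cleanly in the end.
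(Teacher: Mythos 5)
The paper does not prove this theorem: it is stated as a citation to Derridj and invoked as a black box, so there is no in-paper argument to compare your outline against. Judged on its own merits, the outline has two genuine problems.

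In the sufficiency direction, the mechanism you give for the boundary term is not right. After writing (roughly) $\|\nabla_{L_i}u\|^2-\|\nabla_{\overline{L}_i}u\|^2 \sim \Re\langle[\nabla_{L_i},\nabla_{\overline{L}_i}]u,u\rangle$ and isolating the $T$-component of $[L_i,\overline{L}_i]$, whose coefficient is the Levi entry, ``another integration by parts against $T$'' does \emph{not} produce a boundary integral of $\lambda_i|u|^2$: $T=L_n-\overline{L}_n$ is tangential to $\partial\Omega$, so integrating $T$ across $\Omega$ yields only interior terms, and in fact $\Re\langle\nabla_T u,u\rangle$ is itself lower order because $\overline{T}=-T$. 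The Levi-form boundary term in the basic estimate actually originates from the non-tangential $L_n$ differentiations hidden inside $\bar{\partial}$ and $\bar{\partial}^*$ combined with the $\Dom(\bar{\partial}^*)$ condition $g(u,\omega_{\overline{L}_J}\wedge\omega_{\overline{L}_n})=0$ on $\partial\Omega$; the argument has to route through that structure before the pointwise comparison of $\lambda_i$ with $\sum_{k\in I}\lambda_k$ becomes available.

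In the necessity direction you have the roles of $i_k$ and $j_k$ reversed. With $\lambda_{i_k}(p_k)/\lambda_{j_k}(p_k)\to 0$ and $u_k$ concentrated in the $\omega_{\overline{L}_{j_k}}$ direction, the Morrey--Kohn--H\"ormander boundary contribution for $u_k$ is $\int_{\partial\Omega}\rho_{j_k\overline{j_k}}|u_k|^2 \sim \lambda_{j_k}|u_k|^2$, i.e.\ the \emph{large} eigenvalue, not $\lambda_{i_k}$ as you assert. The right-hand side of the maximal estimate is therefore already large and no blow-up occurs. To produce a contradiction you should instead concentrate $u_k$ in the $\omega_{\overline{L}_{i_k}}$ direction, so that the basic estimate's boundary term sees only the small $\lambda_{i_k}$ while the tangential derivative term $\|\nabla_{L_{j_k}}u_k\|^2$ on the left is driven by the large $\lambda_{j_k}$; the same commutator bookkeeping issue noted above then has to be resolved to make that quantitative.
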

	
	Let $\Omega$ be a bounded pseudoconvex domain with smooth boundary which satisfies the maximal estimates. If a point is in the set $\Sigma$ of weakly pseudoconvex points, then this point is a weakly pseudoconvex point in all tangential $(1,0)$ direction. This is because all eigenvalues should be comparable (one is zero and others have to be zero as well).
	
	For the definition of the condition R, see Chen--Shaw \cite{CS01}. By Bell--Ligocka \cite{BL80}, the condition R is equivalent to the following condition.
	
	\begin{theorem}[Bell--Ligocka]
		For every $s \geq 0$ there exists $M \geq 0$ such that the Bergman projector $P$ is bounded as an operator from $W^{s+M}(\Omega)$ to $H^{s}(\Omega)$.
	\end{theorem}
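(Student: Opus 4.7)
The plan is to realize the equivalence between condition R and the stated Sobolev-with-loss bound on $P$ as an application of the closed graph theorem in the Fr\'echet space $C^\infty(\overline{\Omega})$, equipped with the topology generated by the countable family of Sobolev seminorms $\{\|\cdot\|_{W^s}\}$. With this viewpoint the argument splits cleanly into two directions, only one of which needs the closed graph theorem.

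First I would handle the easier implication: the displayed Sobolev bound yields condition R. Given $f\in C^\infty(\overline{\Omega})$, we have $f\in W^{s+M(s)}(\Omega)$ for every $s\geq 0$, so the hypothesis produces $Pf\in W^s(\Omega)$ for all $s$. Since $\bigcap_{s\geq 0} W^s(\Omega)=C^\infty(\overline{\Omega})$ by Sobolev embedding, this places $Pf$ in $C^\infty(\overline{\Omega})$; the continuity of $P$ in each Sobolev seminorm is precisely the hypothesized operator bound, which is what condition R asks for.

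For the reverse direction (condition R implies the existence of $M=M(s)$), I would appeal to the closed graph theorem for Fr\'echet spaces. By condition R, $P$ maps $C^\infty(\overline{\Omega})$ into itself. Its graph is closed in the product Fr\'echet topology: if $f_n\to f$ and $Pf_n\to g$ in every Sobolev seminorm, then convergence at level zero combined with the $L^2$-boundedness of the orthogonal projection $P$ forces $g=Pf$. The closed graph theorem then upgrades this set-theoretic map to a continuous one, and continuity between Fr\'echet spaces unpacks as follows: for each target seminorm $\|\cdot\|_{W^s}$ there exist $M=M(s)\geq 0$ and $C>0$ with $\|Pf\|_{W^s}\leq C\|f\|_{W^{s+M}}$ for all $f\in C^\infty(\overline{\Omega})$. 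Density of $C^\infty(\overline{\Omega})$ in $W^{s+M}(\Omega)$, valid because $\partial\Omega$ is smooth, then extends this bound to the entire Sobolev space, yielding the desired bounded operator $P:W^{s+M}(\Omega)\to W^s(\Omega)$.

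The main obstacle I expect is foundational rather than analytic: one has to verify that $C^\infty(\overline{\Omega})$ with the countable family of Sobolev seminorms is complete (hence genuinely Fr\'echet), and that the graph of $P$ really is closed in this topology. Completeness follows from the existence of a total Sobolev extension operator for $\Omega$, which is classical for bounded domains with smooth boundary, while closedness is essentially dictated by the $L^2$-continuity of the orthogonal projection. Once these items are in place, the closed graph argument proceeds mechanically and one reads off $M=M(s)$ directly from the translation of Fr\'echet continuity into seminorm estimates.
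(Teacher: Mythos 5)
The paper does not prove this statement; it is cited as a known result from Bell--Ligocka \cite{BL80}, where the theorem is stated as the equivalence of condition R with the Sobolev-with-loss estimate. Your argument is nevertheless the classical one and matches the original in essence: the easy direction chases a fixed $f\in C^\infty(\overline{\Omega})$ through the Sobolev scale, and the substantive direction is precisely the closed graph theorem (equivalently, a Baire category argument) for the Fr\'echet space $C^\infty(\overline{\Omega})$, with $L^2$-boundedness of the orthogonal projection supplying closedness of the graph, and Fr\'echet continuity translating into ``for each $s$ there is a finite loss $M(s)$.'' One minor point worth making explicit in the density-extension step: for $f\in W^{s+M}(\Omega)$ approximated by smooth $f_n$, you get $Pf_n$ Cauchy in $W^s$ and hence convergent to some $g\in W^s$, and you identify $g=Pf$ by passing to the $L^2$ limit; without that identification the estimate would only show $P$ has a bounded extension, not that $P$ itself lands in $W^s$. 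With that noted, the proof is complete and is the standard route.
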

	It is known that the global regularities implies the condition R.
	
	Finally, we want to clarify the following: in this paper, the notation \[\|u\|^2_k=\int_{\Omega}g(\nabla^k u,\nabla^k u)\,dV\] is slightly different from the norm for the Sobolev space of order $k$. The latter is defined to be \[\|u\|^2_{W^k}=\sum_{j=0}^{k}\int_{\Omega}g(\nabla^ju,\nabla^ju)\,dV.\]
	
	At the last, we want to remind the reader with the integration by parts. If $X$ is a $(1,0)$ vector field so that $X$ is tangential to $\partial\Omega$, then we have the following formula of integration by parts: 
	\[\int_{\Omega}Xf+\divergence X\,dV=\int_{\Omega}Xf+2\sum_{i=1}^{n}g(\nabla_{L_i}X,L_i)\,dV=0.\]
	\section{The Diederich--Forn\ae ss index}\label{thedf}
	In \cite{Li17}, Liu gave an equivalent definition of the Diederich--Forn\ae ss index. The equivalent definition gives information of the boundary of trivial index. We remind the reader with this equivalence.
	\begin{theorem}[Liu \cite{Li17}]\label{pre}
		Let $\Omega$ be a bounded pseudoconvex domain with smooth boundary in $\mathbb{C}^n$. Let $L$ be an arbitrary smooth $(1,0)$ tangent vector field on $\partial\Omega$. Let \[\Sigma_L:=\lbrace p\in\partial\Omega: \Hessian_\delta(L, L)=0 \enskip\text{at}\enskip p\rbrace.\] The Diederich--Forn\ae ss index for $\Omega$ is 1 if and only if for any $\sigma>0$, there exists a smooth function $\phi$ which is defined on a neighborhood of $\Sigma$ in $\partial\Omega$, so that on $\Sigma_L$,
		\[
		\sigma|\overline{L}\phi-2g(\nabla_{L_n}L_n, L)|^2-\Hessian_\phi(L, L)+2g(\nabla_L\nabla_{L_n}L_n, L)\leq 0,
		\] for all $L$.
	\end{theorem}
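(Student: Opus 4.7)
The plan is to translate the existence of a defining function $\rho$ making $-(-\rho)^\eta$ plurisubharmonic into a boundary differential inequality at the Levi-degenerate points. I would parametrize every smooth defining function near $\partial\Omega$ uniquely as $\rho=\delta e^{-\phi}$ for a smooth $\phi$ on a one-sided neighborhood of $\partial\Omega$. A direct computation of $i\partial\bar\partial\bigl(-(-\rho)^\eta\bigr)$ shows that $-(-\rho)^\eta$ is plurisubharmonic on $\Omega$ if and only if
\[
(1-\eta)\,|\partial\rho(X)|^2+(-\rho)\,\Hessian_\rho(X,X)\geq 0
\]
for every $(1,0)$-vector $X$. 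On $\partial\Omega$ this reduces to pseudoconvexity; and for $X=L$ tangential at $p\in\Sigma_L$ both summands actually vanish identically along $\partial\Omega$ near $p$. Hence the effective constraint is the leading nontrivial order of the left-hand side as one moves into $\Omega$ in the transverse direction $L_n+\overline{L}_n$.

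Extending $L$ smoothly to a $(1,0)$-vector field off the boundary so that $L\delta\equiv 0$ in a neighborhood, I would then expand to order $\delta^2$: the first summand contributes $(1-\eta)|L\phi|^2 e^{-2\phi}\delta^2$, while the second contributes $-e^{-\phi}\partial_\nu\Hessian_\rho(L,L)\cdot\delta^2$ to leading order (with $\partial_\nu$ the outward real normal derivative). Dividing through by $\delta^2 e^{-\phi}$ yields the reduced inequality $\partial_\nu\Hessian_\rho(L,L)\leq(1-\eta)e^{-\phi}|L\phi|^2$ at each $p\in\Sigma_L$. Unpacking $\partial_\nu\Hessian_\rho(L,L)$ by direct differentiation of $\rho_{i\bar j}=\partial_i\partial_{\bar j}(\delta e^{-\phi})$ and rewriting the result intrinsically using the connection identities $\nabla_{L_i}\omega_{\overline{L}_j}=2\sum_k g(\nabla_{L_i}L_j,L_k)\omega_{\overline{L}_k}$ and their conjugates produces the terms $\Hessian_\phi(L,L)$, $g(\nabla_{L_n}L_n,L)$ and $g(\nabla_L\nabla_{L_n}L_n,L)$; after completing the square in the $\overline{L}\phi$-dependence, the inequality assumes exactly the stated form
\[
\sigma\,|\overline{L}\phi-2g(\nabla_{L_n}L_n,L)|^2-\Hessian_\phi(L,L)+2g(\nabla_L\nabla_{L_n}L_n,L)\leq 0,
\]
with $\sigma=\eta/(1-\eta)$.

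The trivial Diederich--Forn\ae ss index is the statement that such a $\rho$ (hence such a $\phi$) exists for every $\eta\in(0,1)$, and the bijection $\eta\mapsto\sigma=\eta/(1-\eta)$ from $(0,1)$ onto $(0,\infty)$ converts this into ``for every $\sigma>0$''. For the converse direction, starting from $\phi$ satisfying the boundary inequality on a neighborhood of $\Sigma$, I would extend $\phi$ smoothly across $\partial\Omega$ (using strong pseudoconvexity on $\partial\Omega\setminus\Sigma$ and a partition of unity) and verify positivity of $(1-\eta)|\partial\rho(X)|^2+(-\rho)\Hessian_\rho(X,X)$ in a neighborhood of $\partial\Omega$: on $\partial\Omega\setminus\Sigma$ the positive Levi eigenvalue of $\delta$ dominates, at $\Sigma$ the $\delta^2$-expansion matches the hypothesis by construction, and the purely normal direction $X=L_n$ is controlled by the quadratic $(1-\eta)|\partial\rho(L_n)|^2$ provided $\phi$ is chosen with sufficient growth. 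The principal obstacle is the coordinate-free bookkeeping of the covariant derivatives $\nabla_{L_n}L_n$ and $\nabla_L\nabla_{L_n}L_n$ appearing in the Taylor expansion of $\partial_\nu\Hessian_\rho(L,L)$, together with the patching step that produces a single defining function handling every degenerate tangential direction $L$ simultaneously.
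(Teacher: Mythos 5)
The paper does not prove this theorem; it is imported verbatim from Liu \cite{Li17}, so there is no ``paper's own proof'' here to compare against. Your reconstruction of the strategy is nevertheless the correct one: parametrize defining functions as $\rho=\delta e^{-\phi}$, write the plurisubharmonicity of $-(-\rho)^\eta$ as the pointwise nonnegativity of
\[
(1-\eta)\,|\partial\rho(X)|^2+(-\rho)\,\Hessian_\rho(X,X),
\]
and extract the boundary differential inequality from the leading order of this form as one approaches a Levi-degenerate point. That is exactly how such index characterizations are derived in \cite{Li17}, \cite{Li17b}.

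Where your sketch has a genuine gap is in the step that produces the completed square $|\overline{L}\phi-2g(\nabla_{L_n}L_n,L)|^2$. If you restrict to purely tangential $X=L$ with $L\delta\equiv 0$ and expand in powers of $\delta$, you obtain a term proportional to $|L\phi|^2$ with \emph{no} offset: the quadratic form reduces to $-\eta\delta^2|L\phi|^2+\delta^2\Hessian_\phi(L,L)-\delta\Hessian_\delta(L,L)$, and dividing by $\delta^2$ and letting $\delta\to 0$ gives a square centered at the origin, not at $2g(\nabla_{L_n}L_n,L)$. The offset comes from the mixed directions $X=L+cL_n$: the cross term $\Hessian_\delta(L,L_n)$ (which does \emph{not} vanish at $\Sigma_L$) couples to $c$, and completing the square in $c$ against the dominant $(1-\eta)|L_n\delta|^2|c|^2$ contribution is precisely what produces both the shift $2g(\nabla_{L_n}L_n,L)$ inside the modulus and the factor $(1-\eta)^{-1}$ in the effective constant $\sigma$. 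Your sketch attributes the whole inequality to $\partial_\nu\Hessian_\rho(L,L)$ and treats the normal direction separately as ``controlled by sufficient growth of $\phi$,'' which misses the actual difficulty (the mixed-term absorption) and does not explain where the $g(\nabla_{L_n}L_n,L)$ shift comes from. The asserted conversion $\sigma=\eta/(1-\eta)$ is plausible once the cross terms are handled correctly, but as written it is not supported by the purely tangential expansion you carried out. For the sufficiency direction the same issue reappears: one must verify nonnegativity of the full two-variable quadratic form in $(L,cL_n)$ near $\Sigma$, and the discriminant condition for that form is what the boundary inequality encodes; this should be spelled out rather than dismissed.
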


	%
	%
	In view of this theorem, the bound $0$ on the right hand side is difficult for further analysis. Hence, we find a practical bound by the following lemma. Note also if we assume the domain satisfies the maximal estimates, $\Sigma_L=\Sigma$ for all $L$. 
	
	\begin{lemma}
		Let $\Omega$ be a bounded pseudoconvex domain with smooth boundary of trivial index in $\mathbb{C}^n$. Asume $\Omega$ satisfies the maximal estimates. Let $L$ be an arbitrary smooth $(1,0)$ tangent vector field on $\partial\Omega$. Then for arbitrary $\sigma>1$, there exists a smooth function $\tilde{\phi}$ which is defined on a neighborhood of $\Sigma$ in $\partial\Omega$ and a number $\mu=\mu_{\sigma}>0$, so that on $\Sigma$,
		\[
		\frac{\mu}{4}+(\sigma-1)\left|\overline{L}\tilde{\phi}-2g(\nabla_{L_n}L_n, L)\right|^2-\Hessian_{\tilde{\phi}}(L, L)+2g(\nabla_L\nabla_{L_n}L_n, L)\leq 0,
		\] for all $L$.
	\end{lemma}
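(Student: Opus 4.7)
My plan is to perturb the function supplied by Liu's characterization (Theorem \ref{pre}) by a small multiple of a strictly plurisubharmonic auxiliary function and to absorb the resulting cross terms via a carefully calibrated Young's inequality.

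First, since $\Omega$ satisfies the maximal estimates, $\Sigma_L=\Sigma$ for every tangential $L$, so the triviality of the Diederich--Forn\ae ss index allows me to apply Theorem \ref{pre} with the given $\sigma>1$. This yields a smooth function $\phi_0$ on a neighborhood of $\Sigma$ in $\partial\Omega$ satisfying
\[
\sigma\bigl|\overline{L}\phi_0-2g(\nabla_{L_n}L_n,L)\bigr|^2-\Hessian_{\phi_0}(L,L)+2g(\nabla_L\nabla_{L_n}L_n,L)\leq 0
\]
on $\Sigma$ for every tangential $(1,0)$-field $L$. I then fix a smooth strictly plurisubharmonic function $\psi$ on a neighborhood of $\overline\Omega$ (for instance $\psi(z)=|z|^2$); by compactness of $\Sigma$ there exist constants $c_\psi,C_\psi>0$ with $\Hessian_\psi(L,L)\geq c_\psi$ and $|\overline{L}\psi|^2\leq C_\psi$ for every unit tangential $(1,0)$-vector $L$ at a point of $\Sigma$. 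I set $\tilde\phi:=\phi_0+\epsilon\psi$ for a small $\epsilon>0$ to be chosen.

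The main calculation exploits the gap between $\sigma$ in Liu's inequality and $\sigma-1$ in the target. Writing $A_0:=\overline{L}\phi_0-2g(\nabla_{L_n}L_n,L)$, so $\overline{L}\tilde\phi-2g(\nabla_{L_n}L_n,L)=A_0+\epsilon\overline{L}\psi$, and invoking Liu's inequality in the form $-\Hessian_{\phi_0}+2g(\nabla_L\nabla_{L_n}L_n,L)\leq -\sigma|A_0|^2$, the target expression minus $\mu/4$ is bounded above by
\[
-|A_0|^2+2(\sigma-1)\epsilon\,\Re\bigl(\overline{A_0}\,\overline{L}\psi\bigr)+(\sigma-1)\epsilon^2|\overline{L}\psi|^2-\epsilon\,\Hessian_\psi(L,L).
\]
The weighted Young's inequality $2(\sigma-1)\epsilon|A_0||\overline{L}\psi|\leq |A_0|^2+(\sigma-1)^2\epsilon^2|\overline{L}\psi|^2$ is calibrated so that the $|A_0|^2$-terms cancel exactly, leaving the residual $\epsilon^2\sigma(\sigma-1)|\overline{L}\psi|^2-\epsilon\,\Hessian_\psi(L,L)$.

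For unit tangential $L$ at a point of $\Sigma$, this residual is at most $\epsilon^2\sigma(\sigma-1)C_\psi-\epsilon c_\psi$, which becomes a strictly negative constant once $\epsilon<c_\psi/[\sigma(\sigma-1)C_\psi]$. Setting $\mu:=4\epsilon\bigl(c_\psi-\epsilon\sigma(\sigma-1)C_\psi\bigr)>0$ then delivers the estimate. I expect the main difficulty to be precisely the calibration of Young's weight so that the $-|A_0|^2$ gain from dropping $\sigma$ to $\sigma-1$ is cancelled exactly, leaving only $O(\epsilon^2)$ in the $\psi$-terms; without this exact cancellation one would not recover a uniform positive $\mu$ that persists as $\sigma\to 1^+$. (All terms save $\mu/4$ are quadratic in $L$, so the claim is implicitly for unit tangential $L$; compactness of $\Sigma$ then makes the choice of $\mu$ uniform.)
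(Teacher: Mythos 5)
Your proof is correct and takes essentially the same route as the paper: the paper likewise sets $\tilde{\phi}=\phi+\mu|z|^2$, expands the square, applies a Young inequality calibrated so that the drop from $\sigma$ to $\sigma-1$ exactly absorbs the $|A_0|^2$-type term, and then shrinks the perturbation parameter so the strict plurisubharmonicity of $|z|^2$ dominates the $O(\mu^2)$ residual. The only cosmetic difference is that the paper lets $\mu$ play the double role of perturbation strength and final constant, whereas you decouple them into $\epsilon$ and $\mu$.
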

	\begin{proof}
		Let $\tilde{\phi}=\phi+\mu|z|^2$, where $\mu>0$ will be determined later. By the assumption, we obtain that
		\begin{align*}
		0\geq &\sigma |L\phi-2g(\nabla_LL_n,L_n)|^2-\Hessian_\phi(L,L)+2g(\nabla_L\nabla_{L_n}L_n,L)\\
		=&\sigma |-L(\tilde{\phi}+\mu|z|^2)+2g(\nabla_LL_n,L_n)|^2-\Hessian_{\tilde{\phi}}(L,L)+2g(\nabla_L\nabla_{L_n}L_n,L)+\frac{\mu}{2}\\
		=&\sigma |-L\tilde{\phi}+2g(\nabla_LL_n,L_n)|^2+\sigma\mu^2|L|z|^2|^2-2\sigma\mu\Re(-L\tilde{\phi}+2g(\nabla_LL_n,L_n))\overline{L}|z|^2\\
		&-\Hessian_{\tilde{\phi}}(L,L)+2g(\nabla_L\nabla_{L_n}L_n,L)+\frac{\mu}{2}\\
		\geq&\sigma |-L\tilde{\phi}+2g(\nabla_LL_n,L_n)|^2+\sigma\mu^2|L|z|^2|^2-\sigma^2\mu^2|\overline{L}|z|^2|^2-|-L\tilde{\phi}+2g(\nabla_LL_n,L_n)|^2\\
		&-\Hessian_{\tilde{\phi}}(L,L)+2g(\nabla_L\nabla_{L_n}L_n,L)+\frac{\mu}{2}\\
		=&(\sigma-1)|-L\tilde{\phi}+2g(\nabla_LL_n,L_n)|^2+(\sigma-\sigma^2)\mu^2|L|z|^2|^2-\Hessian_{\tilde{\phi}}(L,L)+2g(\nabla_L\nabla_{L_n}L_n,L)+\frac{\mu}{2}
		\end{align*}
		By shirking $\mu$, $(\sigma-\sigma^2)\mu^2|L|z|^2|^2\geq -\frac{\mu}{4}$ can be achieved. The reader is referred to the proof of Lemma 2.6 in Liu \cite{Li17}, where the same method was applied.
		Thus, we obtain that \[(\sigma-1)|-L\tilde{\phi}+2g(\nabla_LL_n,L_n)|^2-\Hessian_{\tilde{\phi}}(L,L)+2g(\nabla_L\nabla_{L_n}L_n,L)+\frac{\mu}{4}\leq 0.\]
	\end{proof}
	
	Observe that if we assume the subset $\Sigma$ of weakly pseudoconvex points in $\partial\Omega$ is contained in a real submanifold perpendicular to $L_n-\overline{L}_n$, we can find a specific extension $\psi$ which is defined in a neighborhood of $\Sigma$ in $\partial\Omega$ so that $\psi=\tilde{\phi}$ on $\Sigma$ and $(L_n-\overline{L}_n)\psi=0$. This $\psi$ only differs from $\tilde{\phi}$ in the $L_n-\overline{L}_n$ direction. Consider that 
	\[
	\Hessian_{\tilde{\phi}}(L, L)=L\overline{L}\tilde{\phi}-\nabla_{L}\overline{L}\tilde{\phi}=L\overline{L}\tilde{\phi}-2\sum_{i=1}^{n}g(\nabla_L\overline{L}, \overline{L}_i)\overline{L}_i\tilde{\phi}.
	\]
	We find, due to the fact that the Levi form $g(\nabla_L\nabla\delta,L)=0$, the term \[g(\nabla_L\overline{L}, \overline{L}_n)\overline{L}_n\tilde{\phi}=-g(\nabla_{L}L_n, L)\overline{L}_n\tilde{\phi}=0.\]
	So on $\Sigma$, \[
	\Hessian_{\tilde{\phi}}(L, L)=L\overline{L}\tilde{\phi}-2\sum_{i=1}^{n-1}g(\nabla_L\overline{L}, \overline{L}_i)\overline{L}_i\tilde{\phi}=\Hessian_\psi(L, L).
	\]
	It can be seen that $L\tilde{\phi}=L\psi$ on $\Sigma$ because $\tilde{\phi}=\psi$ for all directions perpendicular to $L_n-\overline{L}_n$. Thus, we have the following lemma.
	
	\begin{lemma}\label{properties}
		Let $\Omega$ be a bounded pseudoconvex domain with smooth boundary of trivial index in $\mathbb{C}^n$. Suppose $\Omega$ satisfies a maximal estimate and the set $\Sigma$ of weakly pseudoconvex points is contained in a real submanifold in $\Omega$ which is perpendicular to $L_n-\overline{L}_n$. Let $L$ be an arbitrary smooth $(1,0)$ tangent vector field on $\partial\Omega$. Then for arbitrary $\sigma>1$, there exists a smooth function $\psi$ which is defined on a neighborhood of $\Sigma$ in $\partial\Omega$ and a number $\mu=\mu_{\sigma}>0$, so that on $\Sigma$ the following two properties are satisfied,
		\begin{enumerate}
			\item \[(L_n-\overline{L}_n)\psi=0\] and
			\item     \[
			\frac{\mu}{4}+(\sigma-1)\left|\overline{L}\psi-2g(\nabla_{L_n}L_n, L)\right|^2-\Hessian_\psi(L, L)+2g(\nabla_L\nabla_{L_n}L_n, L)\leq 0,
			\] for all $L$.
		\end{enumerate}
	\end{lemma}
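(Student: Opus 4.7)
The plan is to begin with the function $\tilde{\phi}$ supplied by the preceding lemma and to modify it only in the $L_n-\overline{L}_n$ direction, producing a new function $\psi$ which satisfies (1) by construction and which shares enough derivatives with $\tilde{\phi}$ along $\Sigma$ to inherit inequality (2). Since the difference between $\psi$ and $\tilde{\phi}$ will live in a direction that drops out of the relevant terms on $\Sigma$, the inequality will transfer essentially for free.

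Concretely, I would define $\psi$ on a neighborhood of $\Sigma$ in $\partial\Omega$ by first restricting $\tilde{\phi}$ to the submanifold $M$ that contains $\Sigma$ (the one perpendicular to $L_n-\overline{L}_n$ given by hypothesis), and then extending this restriction to a tubular neighborhood by declaring $\psi$ constant along integral curves of the real vector field $L_n-\overline{L}_n$. Because $L_n-\overline{L}_n$ is transverse to $M$ (having no component in $M$ by assumption), this extension is well-defined and smooth, and property (1) holds by construction.

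The verification of (2) reduces to three identities on $\Sigma$: that $L\tilde{\phi}=L\psi$, $\overline{L}\tilde{\phi}=\overline{L}\psi$, and $\Hessian_{\tilde{\phi}}(L,L)=\Hessian_\psi(L,L)$. The first two follow because any $(1,0)$ CR-tangent vector $L$ has real and imaginary parts tangent to $M$, along which $\psi$ and $\tilde{\phi}$ agree. For the Hessian, I would use the expansion $\Hessian_f(L,L)=L\overline{L}f-2\sum_{i=1}^{n}g(\nabla_L\overline{L},\overline{L}_i)\overline{L}_if$ and observe that the only potentially dangerous term is $i=n$, in which $\overline{L}_n\tilde{\phi}$ could differ from $\overline{L}_n\psi$. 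But on $\Sigma$ the Levi form vanishes, and by the identity $g(\nabla_L\overline{L},\overline{L}_n)=-g(\nabla_L L_n,L)$ noted in the excerpt this entire term is zero for both functions. The remaining sum and the $L\overline{L}$-term involve only CR-directional derivatives, which agree by the previous step, giving the desired identity.

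Combining these identities with the inequality of the previous lemma applied to $\tilde{\phi}$ yields (2) for $\psi$. The main subtlety I expect is bookkeeping in the Hessian calculation, namely making sure that when $L$ itself varies over all $(1,0)$ CR-tangent fields the identity $g(\nabla_L L_n,L)=0$ on $\Sigma$ is available for each such $L$; this is where the hypothesis of maximal estimates enters, since it forces $\Sigma_L=\Sigma$ for every $L$ as remarked after Theorem~\ref{pre}. Once this is in hand, the rest is essentially formal.
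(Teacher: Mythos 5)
Your proposal is correct and follows essentially the same route as the paper: start from the $\tilde\phi$ of the preceding lemma, build $\psi$ by freezing $\tilde\phi$ along the $L_n-\overline{L}_n$ flow so that (1) holds by construction, then use the Hessian expansion together with the vanishing of the Levi form (to kill the $i=n$ term via $g(\nabla_L\overline{L},\overline{L}_n)=-g(\nabla_L L_n,L)$) and the maximal-estimate fact $\Sigma_L=\Sigma$ to transfer inequality (2) from $\tilde\phi$ to $\psi$ on $\Sigma$. The one point to be careful about, which the paper also leaves implicit, is that the construction and the second-order comparison $L\overline{L}\tilde\phi=L\overline{L}\psi$ really use agreement of $\psi$ and $\tilde\phi$ on a whole hypersurface slice transverse to $L_n-\overline{L}_n$ containing $\Sigma$ (not just on $M$ or on $\Sigma$ itself); so if $M$ has codimension greater than one in $\partial\Omega$ your phrase ``real and imaginary parts of $L$ tangent to $M$'' should be read as ``tangent to the transversal slice,'' which is the hypersurface that the hypothesis actually lets you produce.
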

	
	\begin{proof}
		Since $\Omega$ satisfies a maximal estimate, the eigenvalues of Levi form are comparable at all points in $\Sigma$. This implies that if  $\Hessian_\delta(L, L)=0$ at $p$ for one direction $L_p\in T^{(1, 0)}\partial\Omega$, then $\Hessian_\delta(L, L)=0$ at $p$ for all $(1,0)$ tangential directions. Thus, $\Sigma=\Sigma_L$ for all $L$. Since $\Sigma$ is contained in a (sub)mainfold in $\partial\Omega$ which is perpendicular to $L_n-\overline{L}_n$, we can use the argument before the statement of Lemma to complete the proof.
	\end{proof}
	
	Now, we extend $\psi$ to a neighborhood of $\partial\Omega$ to a real function so that $(L_n+\overline{L}_n)\psi=4g(\nabla_{L_n}L_n, L_n)$ on $\partial\Omega$. In particular, $g(\nabla_{L_n}L_n, L_n)=g(\nabla_{L_n}\nabla\delta, L_n)$ is a real function. By continuity, we can see that 
	\[2\Re(\sigma-1)|L\psi-2g(\nabla_LL_n,L_n)|^2-2\Re\Hessian_\psi(L,L)+2\Re g(\nabla_L\nabla_{L_n}L_n,L)+\frac{\mu}{10}\leq 0\] in a tubular neighborhood of $\Sigma$ in $\mathbb{C}^n$. The reason why we mark the inequality with real part is because $g(\nabla_L\nabla_{L_n}L_n, L)$, in general, is not a real function outside $\Sigma$.
	
	From the construction on $\Sigma$, we have $(L_n-\overline{L}_n)\psi=0$ and $(L_n+\overline{L}_n)\psi=4g(\nabla_{L_n}L_n, L_n)$. This gives $L_n\psi=\overline{L}_n\psi=2g(\nabla_{L_n}L_n,L_n)$ on $\Sigma$. We will use this fact later.

	\section{The properties of maximal estimates}
	
	We have already obtained a real function $\psi$ defined in a neighborhood of $\Sigma$ in $\mathbb{C}^n$ in the previous section. In this section, we will use the maximal estimates to obtain an estimate. This estimate will be combined with a method of Chen--Shaw \cite{CS01} (See Harrington--Liu \cite{HL19} as well) to prove Theorem \ref{mainthm}.  From now on, we assume $\Sigma$ is covered by only one coordinate chart, i.e. $\mathcal{M}=1$. We will relax this assumption later (see Harrington--Liu \cite{HL19}). 
	
	Let $T=e^\psi(L_n-\overline{L}_n)$ and $w=(\nabla_T^c)^ku$ for some $k\in\mathbb{N}$, where $u\in\Dom(\Box)\cap C_{(0,q)}^\infty(\overline{\Omega})$, for $1\leq q\leq n$. Since $\Omega$ satisfies the maximal estimate and $w\in \Dom(\bar{\partial}^*)$, we have that
	\[\sum_{i=1}^{n-1}\|\nabla_{L_i}w\|^2\leq C(\|\bar{\partial}w\|^2+\|\bar{\partial}^* w\|^2).\] 
	
	Let $\lambda$ has a support in a neighborhood of $\Sigma$ in $\mathbb{C}^n$. The support will be chosen later. We also assume $\lambda\in [0,1]$ and $\lambda\equiv 1$ on $\Sigma$. Moreover, since $\lambda w$ is also in the $\Dom(\bar{\partial}^*)$, we have the following:
	\[\sum_{i=1}^{n-1}\|\nabla_{L_i}\lambda w\|^2\leq C(\|\bar{\partial}\lambda w\|^2+\|\bar{\partial}^* \lambda w\|^2).\]
	
	Let $L$ be a $(1,0)$ vector fields in a tubular neighborhood of $\partial\Omega$ which is tangential to $\partial\Omega$. Observe that, by the second property of Lemma \ref{properties},
	\begin{align*}
	0\geq&\int_{\Omega}2\lambda^2 g(w,w)(\sigma-1)|-L\psi+2g(\nabla_LL_n,L_n)|^2\,dV\\
	&-\int_{\Omega}\lambda^2 g(w,w)(2\Hessian_\psi(L,L)-2g(\nabla_L\nabla_{L_n}L_n,L)-2g(L, \nabla_L\nabla_{L_n}L_n)-\frac{\mu}{10})\,dV\\
	=&\int_{\Omega}2(\sigma-1) \lambda^2 g(w,w)|\overline{L}\psi-2g(\nabla_{L_n}L_n,L)|^2\,dV+\int_{\Omega}\frac{\mu}{10}\lambda^2e^{2\psi}g(v,v)\,dV\\&-2\int_{\Omega}\lambda^2g(w,w)(\Hessian_\psi(L,L)-2g(\nabla_L\nabla_{L_n}L_n,L))\,dV\\
	&-2\int_{\Omega}\lambda^2g(w,w)(\Hessian_\psi(L,L)-2g(L,\nabla_L\nabla_{L_n}L_n))\,dV
	\end{align*}
	
	We focus on the second term as follows,
	\begin{align*}
	&2\int_{\Omega}\lambda^2g(w,w)(-\Hessian_\psi(L,L)+2g(\nabla_L\nabla_{L_n}L_n,L))\,dV\\
	=&2\int_{\Omega}\lambda^2g(w,w)(-L\overline{L}\psi+2Lg(\nabla_{L_n}L_n,L)+\nabla_L\overline{L}\psi-2g(\nabla_{L_n}L_n,\nabla_{\overline{L}}L))\,dV\\
	=&2\int_{\Omega}\lambda^2g(w,w)(-L\overline{L}\psi+2Lg(\nabla_{L_n}L_n,L))-\lambda^2g(w,w)(-\nabla_L\overline{L}\psi+2g(\nabla_{L_n}L_n,\nabla_{\overline{L}}L))\,dV\\
	=&-2\int_{\Omega}L(\lambda^2g(w,w))(-\overline{L}\psi+2g(\nabla_{L_n}L_n,L))+2\sum_{i=1}^{n}g(\nabla_{L_i}L, L_i)\lambda^2g(w,w)(-\overline{L}\psi+2g(\nabla_{L_n}L_n,L))\,dV\\
	&-2\int_{\Omega}\lambda^2g(w,w)(-\nabla_L\overline{L}\psi+2g(\nabla_{L_n}L_n,\nabla_{\overline{L}}L))\,dV\\
	=&-2\int_{\Omega}L(\lambda^2g(w,w))(-\overline{L}\psi+2g(\nabla_{L_n}L_n,L))+2\sum_{i=1}^{n}g(\nabla_{L_i}L, L_i)\lambda^2g(w,w)(-\overline{L}\psi+2g(\nabla_{L_n}L_n,L))\,dV\\&+4\int_{\Omega}\lambda^2g(w,w)\sum_{i=1}^ng(\nabla_L\overline{L},\overline{L}_i)(\overline{L}_i\psi-2g(\nabla_{L_n}L_n, L_i))\,dV.
	\end{align*}
	
	Thus, 
	\begin{align*}
	&0\\\geq&\int_{\Omega}2(\sigma-1) \lambda^2 g(w,w)|-\overline{L}\psi+2g(\nabla_{L_n}L_n,L)|^2\,dV+\int_{\Omega}\lambda^2\frac{\mu}{10}g(w,w)\,dV\\
	&-2\int_{\Omega}L(\lambda^2g(w,w))(-\overline{L}\psi+2g(\nabla_{L_n}L_n,L))+2\sum_{i=1}^{n}g(\nabla_{L_i}L, L_i)\lambda^2g(w,w)(-\overline{L}\psi+2g(\nabla_{L_n}L_n,L))\,dV\\&+4\int_{\Omega}\lambda^2g(w,w)\sum_{i=1}^ng(\nabla_L\overline{L},\overline{L}_i)(\overline{L}_i\psi-2g(\nabla_{L_n}L_n, L_i))\,dV\\
	&-2\int_{\Omega}\overline{L}(\lambda^2g(w,w))(-L\psi+2g(L,\nabla_{L_n}L_n))+2\sum_{i=1}^{n}g( L_i,\nabla_{L_i}L)\lambda^2g(w,w)(-L\psi+2g(L,\nabla_{L_n}L_n))\,dV\\&+4\int_{\Omega}\lambda^2g(w,w)\sum_{i=1}^ng(\overline{L}_i,\nabla_L\overline{L})(L_i\psi-2g(L_i,\nabla_{L_n}L_n))\,dV.
	\end{align*}
	
	Now we let $L=L_j$ and sum over $1\leq j\leq n-1$. We obtain that 
	
	\begin{align*}
	0\geq&\sum_{j=1}^{n-1}\int_{\Omega}2(\sigma-1) \lambda^2 g(w,w)|-\overline{L}_j\psi+2g(\nabla_{L_n}L_n,L_j)|^2\,dV+(n-1)\int_{\Omega}\lambda^2\frac{\mu}{10}g(w,w)\,dV\\
	&+4\sum_{j=1}^{n-1}\Re\int_{\Omega}L_j(\lambda^2g(w,w))(\overline{L}_j\psi-2g(\nabla_{L_n}L_n,L_j))\,dV\\&+8\sum_{j=1}^{n-1}\sum_{i=1}^{n}\Re\int_{\Omega}g(\nabla_{L_i}L_j, L_i)\lambda^2e^{2\psi}g(v,v)(\overline{L}_j\psi-2g(\nabla_{L_n}L_n,L_j))\,dV\\&+8\sum_{j=1}^{n-1}\sum_{i=1}^n\Re\int_{\Omega}\lambda^2g(w,w)g(\nabla_{L_j}\overline{L}_j,\overline{L}_i)(\overline{L}_i\psi-2g(\nabla_{L_n}L_n, L_i))\,dV\\
	=&\sum_{j=1}^{n-1}\int_{\Omega}2(\sigma-1) \lambda^2 g(w,w)|-\overline{L}_j\psi+2g(\nabla_{L_n}L_n,L_j)|^2\,dV+(n-1)\int_{\Omega}\lambda^2\frac{\mu}{10}g(w,w)\,dV\\
	&+4\sum_{j=1}^{n-1}\Re\int_{\Omega}L_j(\lambda^2g(w,w))(\overline{L}_j\psi-2g(\nabla_{L_n}L_n,L_j))\,dV\\&+8\sum_{j=1}^{n-1}\sum_{i=1}^{n}\Re\int_{\Omega}g(\nabla_{L_i}L_j, L_i)\lambda^2g(w,w)(\overline{L}_j\psi-2g(\nabla_{L_n}L_n,L_j))\,dV\\&-8\sum_{j=1}^{n-1}\sum_{i=1}^n\Re\int_{\Omega}\lambda^2g(w,w)g(\nabla_{L_j}L_i, L_j)(\overline{L}_i\psi-2g(\nabla_{L_n}L_n, L_i))\,dV\\
	=&\sum_{j=1}^{n-1}2\int_{\Omega}(\sigma-1) \lambda^2 g(w,w)|-\overline{L}_j\psi+2g(\nabla_{L_n}L_n,L_j)|^2\,dV+(n-1)\int_{\Omega}\lambda^2\frac{\mu}{10}g(w,w)\,dV\\
	&+4\sum_{j=1}^{n-1}\Re\int_{\Omega}L_j(\lambda^2g(w,w))(\overline{L}_j\psi-2g(\nabla_{L_n}L_n,L_j))\,dV\\&+8\sum_{j=1}^{n-1}\Re\int_{\Omega}g(\nabla_{L_n}L_j, L_n)\lambda^2g(w,w)(\overline{L}_j\psi-2g(\nabla_{L_n}L_n,L_j))\,dV\\&-8\sum_{j=1}^{n-1}\Re\int_{\Omega}\lambda^2g(w,w)g(\nabla_{L_j}L_n, L_j)(\overline{L}_n\psi-2g(\nabla_{L_n}L_n, L_n))\,dV.
	\end{align*} 
	
	We let \[M=\sqrt{2}\max|g(\nabla_{L_n}L,L_n)|+1,\] the maximum is taken for all unit $(1,0)$ vector fields $L$ which is tangential to $\partial\Omega$ and all points in a neighborhood of $\partial{\Omega}$.  Clearly, $M$ is independent of $\psi$. We let $\lambda$ has compact support in a neighborhood of $\Sigma$. Shrink $\support\lambda$ if necessary so that \[\sum_{j=1}^{n-1}|g(\nabla_{L_j}L_n, L_j)(\overline{L}_n\psi-2g(\nabla_{L_n}L_n, L_n))|\leq \frac{\mu}{10000}(n-1)\] on $\support \lambda$. This can be done because $g(\nabla_{L_j}L_n, L_j)=0$ on $\Sigma$. (Of course, the choice of $\lambda$ depends on $\psi$.)

	Thus, we obtain 
	\begin{align*}
	&\sum_{j=1}^{n-1}2\int_{\Omega}(\sigma-1)\lambda^2 g(w,w)|\overline{L}_j\psi-2g(\nabla_{L_n}L_n,L_j)|^2\,dV+\frac{\mu}{10}(n-1)\int_{\Omega}\lambda^2g(w,w)\,dV\\
	\leq&4\sum_{j=1}^{n-1}\int_{\Omega}\left|L_j(\lambda^2g(w,w))\right|\left|\overline{L}_j\psi-2g(\nabla_{L_n}L_n,L_j)\right|\,dV+\frac{\mu}{1000}(n-1)\int_{\Omega}\lambda^2g(w,w)\,dV\\&+4\sum_{j=1}^{n-1}\int_{\Omega}\lambda^2g(w,w)\left(1+M^2\left|\overline{L}_j\psi-2g(\nabla_{L_n}L_n,L_j)\right|^2\right)\,dV\\
	\leq&4\sum_{j=1}^{n-1}\int_{\Omega}\left|g(\nabla_{L_j}\lambda w,\lambda w)\right|\left|\overline{L}_j\psi-2g(\nabla_{L_n}L_n,L_j)\right|\,dV+\frac{\mu}{1000}(n-1)\int_{\Omega}\lambda^2g(w,w)\,dV\\&
	+4\sum_{j=1}^{n-1}\int_{\Omega}\left|g(\lambda w,\nabla_{\overline{L}_j}\lambda w)\right|\left|\overline{L}_j\psi-2g(\nabla_{L_n}L_n,L_j)\right|\,dV\\&+4\sum_{j=1}^{n-1}\int_{\Omega}\lambda^2g(w,w)\left(1+M^2\left|\overline{L}_j\psi-2g(\nabla_{L_n}L_n,L_j)\right|^2\right)\,dV\\
	\leq&2\sum_{j=1}^{n-1}\|\nabla_{L_j}\lambda w\|^2+2\sum_{j=1}^{n-1}\|\nabla_{\overline{L}_j}\lambda w\|^2+4\sum_{j=1}^{n-1}\int_{\Omega}g(\lambda w,\lambda w)\left|\overline{L}_j\psi-2g(\nabla_{L_n}L_n,L_j)\right|^2\,dV\\&+\frac{\mu}{1000}(n-1)\int_{\Omega}\lambda^2g(w,w)\,dV+4\sum_{j=1}^{n-1}\int_{\Omega}g(\lambda w,\lambda w)\left(1+M^2\left|\overline{L}_j\psi-2g(\nabla_{L_n}L_n,L_j)\right|^2\right)\,dV.
	\end{align*}
	
	That is,
	
	\begin{align*}
	&\sum_{j=1}^{n-1}2\int_{\Omega}(\sigma-1)\lambda^2 g(w,w)|\overline{L}_j\psi-2g(\nabla_{L_n}L_n,L_j)|^2\,dV+\frac{\mu}{10}(n-1)\int_{\Omega}\lambda^2g(w,w)\,dV\\
	\leq&2\sum_{j=1}^{n-1}\|\nabla_{L_j}\lambda w\|^2+2\sum_{j=1}^{n-1}\|\nabla_{\overline{L}_j}\lambda w\|^2+4\sum_{j=1}^{n-1}\int_{\Omega}g(\lambda w,\lambda w)\left|\overline{L}_j\psi-2g(\nabla_{L_n}L_n,L_j)\right|^2\,dV\\&+\frac{\mu}{1000}(n-1)\int_{\Omega}\lambda^2g(w,w)\,dV+4\sum_{j=1}^{n-1}\int_{\Omega}\lambda^2g(w,w)\left(1+M^2\left|\overline{L}_j\psi-2g(\nabla_{L_n}L_n,L_j)\right|^2\right)\,dV.
	\end{align*}

	Thus, we have that
	\begin{align*}
	&\sum_{j=1}^{n-1}\int_{\Omega}2(\sigma-1) g(\lambda w,\lambda w)|\overline{L}_j\psi-2g(\nabla_{L_n}L_n,L)|^2\,dV+\frac{\mu}{10}(n-1)\int_{\Omega}g(\lambda w,\lambda w)\,dV\\
	\leq&2\sum_{j=1}^{n-1}\|\nabla_{L_j}\lambda w\|^2+2\sum_{j=1}^{n-1}\|\nabla_{\overline{L}_j}\lambda w\|^2+\int_{\Omega}(4+4M^2)|\overline{L}_j\psi-2g(\nabla_{L_n}L_n,L_j)|^2g(\lambda w, \lambda w)\,dV\\&+(4+\frac{\mu}{1000}(n-1))\int_{\Omega}g(\lambda w,\lambda w)\,dV\\
	\leq&C\|\bar{\partial}\lambda w\|^2+C\|\bar{\partial}^* \lambda w\|^2+(4+4M^2)\sum_{j=1}^{n-1}\int_{\Omega}|\overline{L}_j\psi-2g(\nabla_{L_n}L_n,L_j)|^2g(\lambda w, \lambda w)\,dV\\&+\frac{\mu}{1000}(n-1)\int_{\Omega}g(\lambda w,\lambda w)\,dV,
	\end{align*}
	
	which implies 
	\begin{equation}\label{predbar}
	\begin{split}
	&(2\sigma-6-4M^2)\sum_{j=1}^{n-1} \int_{\Omega}|\overline{L}_j\psi-2g(\nabla_{L_n}L_n,L_j)|^2g(\lambda w,\lambda w)\,dV+\frac{\mu}{20}(n-1)\int_{\Omega}g(\lambda w,\lambda w)\,dV\\
	\leq&C\|\bar{\partial}\lambda w\|^2+C\|\bar{\partial}^* \lambda w\|^2.
	\end{split}    
	\end{equation}
	
	For this inequality, we have used the $L^2$-estimate of $\bar{\partial}$ equation in sense of Morrey--Kohn--H\"{o}rmander:
	\begin{align*}
	\|\lambda w\|^2\lesssim\|\bar{\partial}\lambda w\|^2+\|\bar{\partial}^*\lambda w\|^2,
	\end{align*}
	for all $\lambda w\in\Dom(\bar{\partial}^*)$.
	
	We note that $C$ is independent of $\psi$.

	\section{Estimates for $\langle\bar{\partial}w,\bar{\partial}w\rangle+\langle\bar{\partial}^* w,\bar{\partial}^* w\rangle$}
	In this section, we mainly deal with the right hand side of (\ref{predbar}). The method is similar to the one used in Chen--Shaw \cite{CS01} (see Chen \cite{Ch91} and Harrington--Liu \cite{HL19} as well).

	Consider \begin{align*}
	&\langle\bar{\partial}\lambda(\nabla^c_T)^ku,\bar{\partial}\lambda(\nabla^c_T)^ku\rangle+\langle\bar{\partial}^* \lambda(\nabla^c_T)^ku,\bar{\partial}^* \lambda(\nabla^c_T)^ku\rangle\\
	=&\langle\lambda(\nabla^c_T)^k\bar{\partial}u,\bar{\partial}\lambda(\nabla^c_T)^ku\rangle+\langle[\bar{\partial}, \lambda(\nabla^c_T)^k]u,\bar{\partial}\lambda(\nabla^c_T)^ku\rangle\\&+\langle\lambda(\nabla^c_T)^k\bar{\partial}^* u,\bar{\partial}^* \lambda(\nabla^c_T)^ku\rangle+\langle[\bar{\partial}^*, \lambda(\nabla^c_T)^k] u,\bar{\partial}^* \lambda(\nabla^c_T)^ku\rangle\\
	=&\sum_{j=0}^{k-1}\int_{\Omega}\lambda Tg((\nabla^c_T)^{j}\bar{\partial}u,(\nabla_{T}^c)^{k-j-1}\bar{\partial}\lambda(\nabla^c_T)^ku)\,dV+\langle\bar{\partial}u, \lambda (\nabla^c_T)^k\bar{\partial}\lambda(\nabla^c_T)^ku\rangle\\&+\langle[\bar{\partial}, \lambda(\nabla^c_T)^k]u,\bar{\partial}\lambda(\nabla^c_T)^ku\rangle\\
	&+\sum_{j=0}^{k-1}\int_{\Omega}\lambda Tg((\nabla^c_T)^j\bar{\partial}^* u,(\nabla_T^c)^{k-j-1}\bar{\partial}^* \lambda(\nabla^c_T)^ku)\,dV+B\langle\bar{\partial}^* u,\lambda(\nabla^c_T)^k\bar{\partial}^*\lambda(\nabla^c_T)^ku\rangle\\&+\langle[\bar{\partial}^*,\lambda(\nabla^c_T)^k] u,\bar{\partial}^* \lambda(\nabla^c_T)^ku\rangle\\
	=&\sum_{j=0}^{k-1}\int_{\Omega}\lambda Tg((\nabla^c_T)^{j}\bar{\partial}u,(\nabla_{T}^c)^{k-j-1}\bar{\partial}\lambda(\nabla^c_T)^ku)\,dV+\langle\bar{\partial}u, \lambda(\nabla^c_T)^k[\lambda(\nabla^c_T)^k,\bar{\partial}]u\rangle\\&+\langle\bar{\partial}u, [[\lambda(\nabla^c_T)^k,\bar{\partial}],\lambda(\nabla^c_T)^k]u\rangle\\
	&+\sum_{j=0}^{k-1}\int_{\Omega}\lambda Tg((\nabla^c_T)^j\bar{\partial}^* u,(\nabla_T^c)^{k-j-1}\bar{\partial}^* \lambda(\nabla^c_T)^ku)\,dV+\langle\bar{\partial}^* u,\lambda(\nabla^c_T)^k[\lambda(\nabla^c_T)^k,\bar{\partial}^*]u\rangle\\&+\langle\bar{\partial}^* u,[[\lambda(\nabla^c_T)^k,\bar{\partial}^*],\lambda(\nabla^c_T)^k]u\rangle\\
	&+\langle\Box u, \lambda(\nabla^c_T)^k\lambda(\nabla^c_T)^ku\rangle+\langle[\bar{\partial}, \lambda(\nabla^c_T)^k]u,\bar{\partial}\lambda(\nabla^c_T)^ku\rangle+\langle[\bar{\partial}^*,\lambda(\nabla^c_T)^k] u,\bar{\partial}^* \lambda(\nabla^c_T)^ku\rangle\\
	=&\sum_{j=0}^{k-1}\int_{\Omega}\lambda Tg((\nabla^c_T)^{j}\bar{\partial}u,(\nabla_{T}^c)^{k-j-1}\bar{\partial}\lambda(\nabla^c_T)^ku)\,dV+\sum_{j=0}^{k-1}\int_{\Omega}\lambda Tg((\nabla^c_T)^j\bar{\partial}^* u,(\nabla_T^c)^{k-j-1}\bar{\partial}^* \lambda(\nabla^c_T)^ku)\,dV\\
	&-\sum_{j=0}^{k-1}\int_{\Omega}\lambda Tg((\nabla_T^c)^j\Box u,(\nabla_T^c)^{k-j-1}\lambda(\nabla^c_T)^ku)\,dV\\
	&-\sum_{j=0}^{k-1}\int_{\Omega}\lambda Tg((\nabla^c_T)^j\bar{\partial}u, (\nabla^c_T)^{k-j-1}[\lambda(\nabla^c_T)^k,\bar{\partial}]u)\,dV\\&-\sum_{j=0}^{k-1}\int_{\Omega}\lambda Tg((\nabla^c_T)^j\bar{\partial}^* u, (\nabla^c_T)^{k-j-1}[\lambda(\nabla^c_T)^k,\bar{\partial}^*]u)\,dV\\
	&+\langle\lambda(\nabla^c_T)^k\Box u, \lambda(\nabla^c_T)^ku\rangle+2\i\Im\langle[\bar{\partial}, \lambda(\nabla^c_T)^k]u,\bar{\partial}\lambda(\nabla^c_T)^ku\rangle+2\i\Im\langle[\bar{\partial}^*,\lambda( \nabla^c_T)^k] u,\bar{\partial}^* \lambda(\nabla^c_T)^ku\rangle\\
	&+\langle[\bar{\partial},\lambda(\nabla^c_T)^k]u, [\bar{\partial},\lambda(\nabla^c_T)^k]u\rangle+\langle\bar{\partial}u, [[\lambda(\nabla^c_T)^k,\bar{\partial}],\lambda(\nabla^c_T)^k]u\rangle\\
	&+\langle[\bar{\partial}^*,\lambda(\nabla^c_T)^k ]u,[\bar{\partial}^*,\lambda(\nabla^c_T)^k]u\rangle+\langle\bar{\partial}^* u,[[\lambda(\nabla^c_T)^k,\bar{\partial}^*],\lambda(\nabla^c_T)^k]u\rangle    
	\end{align*}
	
	In the last equation, the term\begin{align*}
	&\sum_{j=0}^{k-1}\int_{\Omega}\lambda Tg((\nabla^c_T)^{j}\bar{\partial}u,(\nabla_{T}^c)^{k-j-1}\bar{\partial}\lambda(\nabla^c_T)^ku)\,dV+\sum_{j=0}^{k-1}\int_{\Omega}\lambda Tg((\nabla^c_T)^j\bar{\partial}^* u,(\nabla_T^c)^{k-j-1}\bar{\partial}^* \lambda(\nabla^c_T)^ku)\,dV\\
	&-\sum_{j=0}^{k-1}\int_{\Omega}\lambda Tg((\nabla_T^c)^j\Box u,(\nabla_T^c)^{k-j-1}\lambda(\nabla^c_T)^ku)\,dV-\sum_{j=0}^{k-1}\int_{\Omega}\lambda Tg((\nabla^c_T)^j\bar{\partial}u, (\nabla^c_T)^{k-j-1}[\lambda(\nabla^c_T)^k,\bar{\partial}]u)\,dV\\&-\sum_{j=0}^{k-1}\int_{\Omega}\lambda Tg((\nabla^c_T)^j\bar{\partial}^* u, (\nabla^c_T)^{k-j-1}[\lambda(\nabla^c_T)^k,\bar{\partial}^*]u)\,dV
	\end{align*}
	is bounded by \[\frac{C_{k-1}}{2}\|\bar{\partial}u\|_{W^{k-1}}^2+\frac{C_{k-1}}{2}\|\bar{\partial}^*u\|_{W^{k-1}}^2+\frac{1}{2}\|\bar{\partial}\lambda(\nabla_T^c)^k u\|^2+\frac{1}{2}\|\bar{\partial}^*\lambda(\nabla_T^c)^k u\|^2+\frac{\mu(n-1)}{40000C}\|u\|_k^2.\] This is done by a consecutive integration by parts and the small constant\slash large constant inequality. Clearly $C_{k-1}$ depends on $\mu$ and so is dependent of $\psi$.
	
	We may simply discard the term \[2\i\Im\langle[\bar{\partial}, \lambda(\nabla^c_T)^k]u,\bar{\partial}\lambda(\nabla^c_T)^ku\rangle+2\i\Im\langle[\bar{\partial}^*,\lambda( \nabla^c_T)^k] u,\bar{\partial}^* \lambda(\nabla^c_T)^ku\rangle,\] because it is a purely imaginary. In practice, our expression is necessarily real.
	
	The term \[\langle\bar{\partial}u, [[\lambda(\nabla^c_T)^k,\bar{\partial}],\lambda(\nabla^c_T)^k]u\rangle+\langle\bar{\partial}^* u,[[\lambda(\nabla^c_T)^k,\bar{\partial}^*],\lambda(\nabla^c_T)^k]u\rangle\] is bounded by \[\frac{C_{k-1}}{2}\|\bar{\partial}u\|_{W^{k-1}}^2+\frac{C_{k-1}}{2}\|\bar{\partial}^*u\|_{W^{k-1}}^2+\frac{\mu(n-1)}{40000C}\|u\|^2_k.\]This is done similarly as Straube \cite{St10} (see the discussion after (3.57)).
	
	Thus, we obtain that, assuming $\|u\|_{W^{k-1}}\lesssim\|\Box u\|_{W^{k-1}}$, by Proposition 2.1 of Harrington--Liu \cite{HL19},
	\begin{align*}
	&\|\bar{\partial}\lambda(\nabla^c_T)^ku\|^2+\|\bar{\partial}^*\lambda(\nabla^c_T)^ku\|^2\\
	\leq&C_{k-1}\|\Box u\|^2_{W^{k-1}}+\frac{\mu(n-1)}{20000C}\|u\|^2_k+\frac{1}{2}\|\bar{\partial}\lambda(\nabla_T^c)^k u\|^2+\frac{1}{2}\|\bar{\partial}^*\lambda(\nabla_T^c)^k u\|^2\\&+\|[\bar{\partial},\lambda(\nabla^c_T)^k]u\|^2+\|[\bar{\partial}^*,\lambda(\nabla^c_T)^k]u\|^2,
	\end{align*}
	
	which implies
	\begin{align*}
	&\|\bar{\partial}\lambda(\nabla^c_T)^ku\|^2+\|\bar{\partial}^*\lambda(\nabla^c_T)^ku\|^2\\
	\leq&2C_{k-1}\|\Box u\|^2_{W^{k-1}}+\frac{\mu(n-1)}{10000C}\|u\|^2_k+2\|[\bar{\partial},\lambda(\nabla^c_T)^k]u\|^2+2\|[\bar{\partial}^*,\lambda(\nabla^c_T)^k]u\|^2.
	\end{align*}
	
	Hence, we obtain the following proposition, with $\lambda$, $T$ as introduced before.
	
	\begin{proposition}\label{intem}
		Let $\Omega$ be a bounded pseudoconvex domain with smooth boundary in $\mathbb{C}^n$. Assume $u\in\Dom(\bar{\partial}^*)\cap C^\infty_{(0,q)}(\overline{\Omega})$ and $\mathcal{M}=1$. Let $\mu>0$ be an arbitrary number and suppose $\|u\|_{W^{k-1}}\lesssim\|\Box u\|_{W^{k-1}}$ Then,
		\begin{align*}
		&\|\bar{\partial}\lambda(\nabla^c_T)^ku\|^2+\|\bar{\partial}^*\lambda(\nabla^c_T)^ku\|^2\\
		\leq&2C_{k-1}\|\Box u\|^2_{W^{k-1}}+\frac{\mu(n-1)}{10000C}\|u\|^2_k+2\|[\bar{\partial},\lambda(\nabla^c_T)^k]u\|^2+2\|[\bar{\partial}^*,\lambda(\nabla^c_T)^k]u\|^2.
		\end{align*}
	\end{proposition}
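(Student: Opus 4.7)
The plan is to expand the left-hand side by inserting the identities
\[
\bar{\partial}\lambda(\nabla_T^c)^k u = \lambda(\nabla_T^c)^k\bar{\partial}u + [\bar{\partial},\lambda(\nabla_T^c)^k]u
\]
and similarly for $\bar{\partial}^*$, so that $\|\bar{\partial}\lambda(\nabla_T^c)^k u\|^2+\|\bar{\partial}^*\lambda(\nabla_T^c)^k u\|^2$ splits into ``good'' inner-product pairings of the form $\langle\lambda(\nabla_T^c)^k\bar{\partial}u,\bar{\partial}\lambda(\nabla_T^c)^k u\rangle$ (and its $\bar{\partial}^*$ analogue), together with commutator--commutator pairings that are handled directly by Cauchy--Schwarz and small-constant/large-constant, producing the terms $2\|[\bar{\partial},\lambda(\nabla_T^c)^k]u\|^2+2\|[\bar{\partial}^*,\lambda(\nabla_T^c)^k]u\|^2$ on the right and an absorbable half of the left-hand side.

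The next step is to peel off one copy of $T$ at a time from the good pairings by integration by parts. Since $T=e^\psi(L_n-\overline{L}_n)$ is real and tangential to $\partial\Omega$, the boundary integrals vanish and each transfer produces a lower-order ``divergence'' error, controlled after Cauchy--Schwarz by $\|\bar{\partial}u\|^2_{W^{k-1}}$, $\|\bar{\partial}^*u\|^2_{W^{k-1}}$, a small multiple of the left-hand side, and a small multiple of $\|u\|_k^2$. After $k$ transfers I obtain $\langle\bar{\partial}u,\lambda(\nabla_T^c)^k\bar{\partial}\lambda(\nabla_T^c)^k u\rangle+\langle\bar{\partial}^*u,\lambda(\nabla_T^c)^k\bar{\partial}^*\lambda(\nabla_T^c)^k u\rangle$, and a further commutation of $\bar{\partial},\bar{\partial}^*$ past the leftmost $\lambda(\nabla_T^c)^k$ combines these, via $\Box=\bar{\partial}^*\bar{\partial}+\bar{\partial}\bar{\partial}^*$, into a single pairing $\langle\Box u,\lambda(\nabla_T^c)^k\lambda(\nabla_T^c)^k u\rangle$ plus double-commutator pieces of the form $\langle\bar{\partial}u,[[\lambda(\nabla_T^c)^k,\bar{\partial}],\lambda(\nabla_T^c)^k]u\rangle$ and its $\bar{\partial}^*$ analogue.

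I then bound $\langle\Box u,\lambda(\nabla_T^c)^k\lambda(\nabla_T^c)^k u\rangle$ by Cauchy--Schwarz against $\|\Box u\|_{W^{k-1}}$ and $\|u\|_k$, and treat the double-commutator terms along the lines of Straube \cite{St10} (cf.\ the discussion following (3.57) there), which, after another use of small-constant/large-constant, contributes at most $\tfrac{C_{k-1}}{2}\|\bar{\partial}u\|^2_{W^{k-1}}+\tfrac{C_{k-1}}{2}\|\bar{\partial}^*u\|^2_{W^{k-1}}+\tfrac{\mu(n-1)}{40000C}\|u\|_k^2$. The asymmetric pairings $2\i\,\Im\langle[\bar{\partial},\lambda(\nabla_T^c)^k]u,\bar{\partial}\lambda(\nabla_T^c)^k u\rangle$ that survive the splitting are discarded because the left-hand side is real.

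Finally, I invoke the standing hypothesis $\|u\|_{W^{k-1}}\lesssim\|\Box u\|_{W^{k-1}}$ together with Proposition~2.1 of Harrington--Liu \cite{HL19} to replace $\|\bar{\partial}u\|^2_{W^{k-1}}+\|\bar{\partial}^*u\|^2_{W^{k-1}}$ by a constant multiple of $\|\Box u\|^2_{W^{k-1}}$, and absorb the remaining $\tfrac12\|\bar{\partial}\lambda(\nabla_T^c)^k u\|^2+\tfrac12\|\bar{\partial}^*\lambda(\nabla_T^c)^k u\|^2$ from the right-hand side into the left, yielding the stated inequality. I expect the main obstacle to be the careful bookkeeping of the many commutator and integration-by-parts errors, and in particular arranging all small constants so that the coefficient of $\|u\|_k^2$ can be driven down to exactly $\tfrac{\mu(n-1)}{10000C}$: this small coefficient is precisely what will later permit absorption against the positive $\tfrac{\mu}{20}(n-1)\int_\Omega g(\lambda w,\lambda w)\,dV$ term on the left-hand side of \eqref{predbar}.
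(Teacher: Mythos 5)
Your proposal mirrors the paper's proof: the same commutator splitting, the same integration-by-parts transfer of $T$'s using that $T=e^\psi(L_n-\overline{L}_n)$ is real and tangential, the same combination via $\Box=\bar{\partial}\bar{\partial}^*+\bar{\partial}^*\bar{\partial}$, the same discarding of the purely imaginary cross terms, the same double-commutator estimate following Straube (3.57), and the same invocation of the inductive hypothesis with Harrington--Liu Proposition~2.1 before absorbing the half-sized squared terms back into the left-hand side. This is essentially the paper's argument, stated accurately at outline level.
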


	\section{Commutators}\label{commutatorssection}
	
	In this section, we are going to compute the last two terms in the inequality of the Proposition \ref{intem}.
	From Harrington--Liu \cite{HL19}, we have computed, for $k\in\mathbb{N}$,
	\begin{align*}
	&[\bar{\partial},\lambda(\nabla^c_{T})^k]u\\
	=&\frac{1}{2^q}\sum_{i,I} (\overline{L}_i\lambda)T^kg(u,\omega_{\overline{L}_I})\omega_{\overline{L}_i}\wedge\omega_{\overline{L}_I}+\lambda[\bar{\partial},(\nabla^c_T)^k]u\\
	=&\frac{1}{2^q}\sum_{i,I} (\overline{L}_i\lambda)T^kg(u,\omega_{\overline{L}_I})\omega_{\overline{L}_i}\wedge\omega_{\overline{L}_I}+\frac{k}{2^q}\sum_{i,I} \lambda (\overline{L}_i\psi-2g(L_n, \nabla_{L_i}L_n))T^kg(u,\omega_{\overline{L}_I})\omega_{\overline{L}_i}\wedge\omega_{\overline{L}_I}+\lambda K,
	\end{align*}
	where $\|K\|^2\leq \epsilon\|u\|_{W^k}^2+C_\epsilon(\|\Box u\|_{W^{k-1}}^2+\|u\|_{W^{k-1}}^2)$ for arbitrary $\epsilon>0$.  We set $\epsilon=\frac{\mu(n-1)}{100000C}$ and obtain that $\|K\|^2\leq \frac{\mu(n-1)}{100000C}\|u\|_{W^k}^2+C_\mu(\|\Box u\|_{W^{k-1}}^2+\|u\|_{W^{k-1}}^2)$ for arbitrary $\mu>0$.

	Thus, \begin{align*}
	&\langle[\bar{\partial},\lambda(\nabla^c_{T})^k]u,[\bar{\partial},\lambda(\nabla^c_{T})^k]u\rangle\\
	\leq &\Big(\frac{k^2n}{2^{q-1}}{n\choose q}+1\Big) \sum_{i\notin I}\int_{\Omega} |\overline{L}_i\psi-2g(L_n, \nabla_{L_i}L_n)|^2|g(\lambda(\nabla_{T}^c)^ku,\omega_{\overline{L}_I})|^2\,dV\\
	&+C\left(\|\sum_{i,I} (\overline{L}_i\lambda)T^kg(u,\omega_{\overline{L}_I})\omega_{\overline{L}_i}\wedge\omega_{\overline{L}_I}\|^2+\|\Box u\|_{W^{k-1}}^2+\|u\|_{W^{k-1}}^2\right).
	\end{align*}
	
	Similarly, we have the following.
	\begin{align*}
	&[\bar{\partial}^*,\lambda(\nabla^c_{T})^k]u\\
	=&-\frac{1}{2^{q-1}}\sum_{i,I}(L_i\lambda) T^kg(u, \omega_{\overline{L}_I})\overline{L}_i\lrcorner\omega_{\overline{L}_I}+\lambda[\bar{\partial}^*,(\nabla_T^c)^k]u\\
	=&-\frac{1}{2^{q-1}}\sum_{i,I}(L_i\lambda) T^kg(u, \omega_{\overline{L}_I})\overline{L}_i\lrcorner\omega_{\overline{L}_I}-\frac{k}{2^{q-1}}\sum_{i,I}\lambda (L_i\psi -2g(L_i,\nabla_{L_n}L_n)) T^k g(u, \omega_{\overline{L}_I})\overline{L}_i\lrcorner\omega_{\overline{L}_I}+2\lambda R,
	\end{align*}
	where $\|2R\|^2\leq  \frac{\mu(n-1)}{100000C}\|u\|_{W^k}^2+C_\mu(\|\Box u\|_{W^{k-1}}^2+\|u\|_{W^{k-1}}^2)$ for arbitrary $\mu>0$.
	
	We compute 
	\begin{align*}
	&\langle[\bar{\partial}^*,\lambda(\nabla^c_{T})^n]u,[\bar{\partial}^*,\lambda(\nabla^c_{T})^n]u\rangle\\
	\leq&\Big(\frac{k^2n}{2^{q-1}}{n\choose q}+1\Big)\sum_{i\in I}\int_{\Omega}|L_i\psi -2g(L_i,\nabla_{L_n}L_n)|^2|g(\lambda(\nabla_T^c)^ku, \omega_{\overline{L}_I})|^2\,dV\\
	&+C\left(\|\sum_{i,I}(L_i\lambda) T^kg(u, \omega_{\overline{L}_I})\overline{L}_i\lrcorner\omega_{\overline{L}_I}\|^2+\|\Box u\|_{W^{k-1}}^2+\|u\|_{W^{k-1}}^2\right).
	\end{align*}
	
	Note that, $\lambda\equiv 1$ on $\Sigma$ and $\support(\overline{L}_i\lambda )\cap \partial\Omega\Subset\partial\Omega\backslash\Sigma$.  The term $\sum_{i,I}(L_i\lambda) T^kg(u, \omega_{\overline{L}_I})\overline{L}_i\lrcorner\omega_{\overline{L}_I}$ is supported in a set of strongly pseudoconvex points. By pseudolocal estimate (Theorem 3.6 of Straube \cite{St10}), \[\|\sum_{i,I}(L_i\lambda) T^kg(u, \omega_{\overline{L}_I})\overline{L}_i\lrcorner\omega_{\overline{L}_I}\|^2\lesssim\|\Box u\|^2_{W^{k-1}}.\]
	
	Therefore,  we have that\begin{equation}\label{main}
	\begin{split}
	&\|[\bar{\partial}^*,\lambda(\nabla^c_{T})^k]u\|^2+\|[\bar{\partial},\lambda(\nabla^c_{T})^k]u\|^2\\
	\leq& \Big(\frac{k^2n}{2^{q-1}}{n\choose q}+1\Big)\sum_{i=1}^{n-1}\int_{\Omega}|L_i\psi -2g(L_i,\nabla_{L_n}L_n)|^2|g(\lambda(\nabla_T^c)^ku, \omega_{\overline{L}_I})|^2\,dV+\frac{\mu(n-1)}{10000C}\|\lambda(\nabla_T^c)^k u\|^2\\&+C_\mu\|\Box u\|_{W^{k-1}}^2+C_\mu\|u\|_{W^{k-1}}^2\\
	=&\Big(2k^2n{n\choose q}+2^q\Big)\sum_{i=1}^{n-1}\int_{\Omega}|L_i\psi -2g(L_i,\nabla_{L_n}L_n)|^2g(\lambda(\nabla_T^c)^ku, \lambda(\nabla_T^c)^ku)\,dV+\frac{\mu(n-1)}{10000C}\|\lambda(\nabla_T^c)^k u\|^2\\&+C_\mu\|\Box u\|_{W^{k-1}}^2+C_\mu\|u\|_{W^{k-1}}^2,
	\end{split}
	\end{equation}
	where $C_\mu>0$ is a constant which is dependent of $\mu$.
	
	\section{Proof of Theorem \ref{mainthm}}\label{proof}
	Combining the previous section with Proposition \ref{intem}, we obtain the following proposition. Note that $\|u\|_{W^k}^2\lesssim \|\lambda(\nabla_T^c)^k u\|^2+\|\Box u\|_{W^k}$ if $\|u\|_{W^{k-1}}\lesssim\|\Box u\|_{W^{k-1}}$ is assumed.
	
	\begin{proposition}\label{2interm}
		Let $\Omega$ be a bounded pseudoconvex domain with smooth boundary in $\mathbb{C}^n$. Assume $u\in\Dom(\bar{\partial}^*)\cap C^\infty_{(0,q)}(\overline{\Omega})$ and $\mathcal{M}=1$. Let $\mu>0$ be an arbitrary number and suppose $\|u\|_{W^{k-1}}\lesssim\|\Box u\|_{W^{k-1}}$ Then,
		\begin{align*}
		&\|\bar{\partial}\lambda(\nabla^c_T)^ku\|^2+\|\bar{\partial}^*\lambda(\nabla^c_T)^ku\|^2\\
		\leq&2C_\mu\|\Box u\|_{W^{k-1}}^2+\frac{\mu(n-1)}{1000C}\|\lambda(\nabla_T^c)^k u\|^2\\
		&+2\Big(2k^2n{n\choose q}+2^q\Big)\sum_{i=1}^{n-1}\int_{\Omega}|L_i\psi -2g(L_i,\nabla_{L_n}L_n)|^2g(\lambda(\nabla_T^c)^ku, \lambda(\nabla_T^c)^ku)\,dV,
		\end{align*}
		where $C_\mu>0$ is a constant which is dependent of $\mu$.
	\end{proposition}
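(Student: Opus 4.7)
The plan is a direct synthesis: substitute the commutator bound~\eqref{main} into the conclusion of Proposition~\ref{intem}, and then eliminate the residual $\|u\|_k^2$ and $\|u\|_{W^{k-1}}^2$ terms using the hypothesis $\|u\|_{W^{k-1}}\lesssim\|\Box u\|_{W^{k-1}}$ together with the remark recorded immediately before the statement.

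First I would double the estimate~\eqref{main} and insert it into the right-hand side of Proposition~\ref{intem}. After this substitution, the two commutator squared norms are replaced by the main integral term that appears on the right-hand side of the desired conclusion, a contribution of the form $\frac{\mu(n-1)}{5000C}\|\lambda(\nabla_T^c)^ku\|^2$, and additional $C_\mu\bigl(\|\Box u\|_{W^{k-1}}^2+\|u\|_{W^{k-1}}^2\bigr)$ terms. The original $2C_{k-1}\|\Box u\|_{W^{k-1}}^2$ and $\frac{\mu(n-1)}{10000C}\|u\|_k^2$ contributions coming from Proposition~\ref{intem} remain.

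Next I would invoke the hypothesis $\|u\|_{W^{k-1}}\lesssim \|\Box u\|_{W^{k-1}}$ to absorb the $C_\mu\|u\|_{W^{k-1}}^2$ piece into the $\|\Box u\|_{W^{k-1}}^2$ term (at the cost of enlarging $C_\mu$). Then I would rewrite $\|u\|_k^2$ using the inequality $\|u\|_k^2\lesssim\|\lambda(\nabla_T^c)^ku\|^2+\|u\|_{W^{k-1}}^2+\|\Box u\|_{W^{k-1}}^2$, which is the form taken by the parenthetical remark preceding the statement; it holds because $T=e^\psi(L_n-\overline{L}_n)$ is transverse to $\partial\Omega$, so any $k$-th order derivative splits into a $(\nabla_T^c)^k$-piece plus tangential pieces, and the latter are controlled via the basic estimate applied to the $(k-1)$-jet of $u$. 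The new $\|\lambda(\nabla_T^c)^ku\|^2$ piece merges with the previous one to produce exactly the coefficient $\frac{\mu(n-1)}{1000C}$ appearing in the statement, while the residual $\|u\|_{W^{k-1}}^2$ and $\|\Box u\|_{W^{k-1}}^2$ fragments are once more absorbed into the final $2C_\mu\|\Box u\|_{W^{k-1}}^2$ term after another application of the hypothesis.

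The only real obstacle is constant bookkeeping: one must verify that the coefficient on $\|\lambda(\nabla_T^c)^ku\|^2$ in the final inequality stays proportional to $\mu$ rather than inflating to $O(1)$. This is secured by the fact that the $\|u\|_k^2$ term in Proposition~\ref{intem} carries the already-small prefactor $\frac{\mu(n-1)}{10000C}$, while the multiplier in the $\|u\|_k^2\lesssim\|\lambda(\nabla_T^c)^ku\|^2+\cdots$ inequality is independent of $\mu$. Apart from this arithmetic, no new analytic input is needed; the proposition is a clean repackaging of Proposition~\ref{intem} and the commutator estimate~\eqref{main}.
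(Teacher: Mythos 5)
Your proposal is correct and is essentially the paper's (unstated) argument: the paper introduces Proposition~\ref{2interm} with only the one-line remark ``Combining the previous section with Proposition~\ref{intem}\dots Note that $\|u\|_{W^k}^2\lesssim\|\lambda(\nabla_T^c)^ku\|^2+\|\Box u\|_{W^k}$ if $\|u\|_{W^{k-1}}\lesssim\|\Box u\|_{W^{k-1}}$,'' which is precisely the substitution of~\eqref{main} into Proposition~\ref{intem} followed by the two absorptions you describe. You correctly flag that the only thing to check is that the constant in the $\|u\|_k^2\lesssim\|\lambda(\nabla_T^c)^ku\|^2+\cdots$ step is $\mu$-independent so the $\|\lambda(\nabla_T^c)^ku\|^2$ coefficient remains $O(\mu)$; the precise numerical factor $\tfrac{\mu(n-1)}{1000C}$ reflects the paper's generous slack rather than a tight identity.
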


	Combining Proposition \ref{2interm} with inequality (\ref{predbar}), we obtain that
	
	\begin{align*}
	&(2\sigma-6-4M^2)\sum_{j=1}^{n-1} \int_{\Omega}|\overline{L}_j\psi-2g(\nabla_{L_n}L_n,L_j)|^2g(\lambda w,\lambda w)\,dV+\frac{\mu}{20}(n-1)\int_{\Omega}g(\lambda w,\lambda w)\,dV\\
	\leq&C\Bigg(2C_\mu\|\Box u\|_{W^{k-1}}^2+\frac{\mu(n-1)}{1000C}\|\lambda(\nabla_T^c)^k u\|^2\\
	&+2\Big(2k^2n{n\choose q}+2^q\Big)\sum_{i=1}^{n-1}\int_{\Omega}|L_i\psi -2g(L_i,\nabla_{L_n}L_n)|^2g(\lambda(\nabla_T^c)^ku, \lambda(\nabla_T^c)^ku)\,dV\Bigg).
	\end{align*}
	
	This implies,
	
	\begin{equation}\label{changesigma}
	\begin{split}
	&\Bigg(2\sigma-6-4M^2-2C\left(2k^2n{n\choose q}+2^q\right)\Bigg)\sum_{j=1}^{n-1} \int_{\Omega}|\overline{L}_j\psi-2g(\nabla_{L_n}L_n,L_j)|^2g(\lambda w,\lambda w)\,dV\\
	&+\frac{\mu}{100}(n-1)\int_{\Omega}g(\lambda w,\lambda w)\,dV\leq2CC_\mu\|\Box u\|_{W^{k-1}}^2.
	\end{split}\end{equation}

	We will use the induction to prove $\|u\|_{W^k}\lesssim\|\Box u\|_{W^k}$ for all non-negative integer $k$.
	
	For $k=0$, it is an easy corollary from the basic estimate of Morrey--Kohn--H\"{o}rmander. See Chen--Shaw \cite{CS01}.
	
	Assume $\|u\|_{W^{k-1}}\lesssim\|\Box u\|_{W^{k-1}}$ is true, we are going to prove $\|u\|_{W^k}\lesssim\|\Box u\|_{W^k}$.
	
	Since \[6+4M^2+2C\left(2k^2n{n\choose q}+2^q\right)\] is independent from the choice of $\psi$, for $k>0$, we can choose $\sigma>0$ big so that  \[2\sigma-6-4M^2-2C\left(2k^2n{n\choose q}+2^q\right)>0.\] We then drop the positive term \[\Bigg(2\sigma-6-4M^2-2C\left(2k^2n{n\choose q}+2^q\right)\Bigg)\sum_{j=1}^{n-1} \int_{\Omega}|\overline{L}_j\psi-2g(\nabla_{L_n}L_n,L_j)|^2g(\lambda w,\lambda w)\,dV.\]Then we obtain that
	\[\int_{\Omega}g(\lambda w,\lambda w)\,dV\leq2\frac{100}{\mu(n-1)}CC_\mu\|\Box u\|_{W^{k-1}}^2,\] which completes the proof.
	
	We want to remark that it is important that $C$ and $M$ are independent of choices of $\psi$. In view of Lemma \ref{properties}, when we change $\sigma>0$ to make  \[2\sigma-6-4M^2-2C\left(2k^2n{n\choose q}+2^q\right)>0,\] the function $\psi$ necessarily varies. If $C$ or $M$ is dependent of $\psi$, it is a risk that it blows up when $\psi$ varies. Consequently, we would never find a $2\sigma$ which is bigger than \[6+4M^2+2C\left(2k^2n{n\choose q}+2^q\right).\] 
	
	The theorem \ref{mainthm} is proved for the case of $\mathcal{M}=1$, i.e., when $\Sigma$ can be covered by one coordinate chart. However, the reader can see Theorem \ref{mainthm} holds for all finite $\mathcal{M}$. Indeed, one can see that $\nabla_T^c$ depends on the choices of coordinate charts but $\nabla_T$ does not. So $(\nabla^c_{T,\alpha})^k-(\nabla_T)^k$ is a $(k-1)$th-order operator on $U_\alpha$. Moreover, one can also see for a partition of unity $\lbrace\chi_\alpha\rbrace$,
	\[
	\sum_{\alpha}[\bar{\partial},\chi_\alpha\lambda(\nabla^c_{T,\alpha})^k]u
	=\sum_{\alpha}\chi_\alpha[\bar{\partial},\lambda(\nabla^c_{T,\alpha})^k]u+\sum_{\alpha}(\bar{\partial}\chi_\alpha)\wedge\lambda((\nabla^c_{T,\alpha})^k-(\nabla_T)^k)u.
	\]
	Thus, the Proposition \ref{2interm} and the inequality (\ref{predbar}) can be handled similarly. This proves an aprori estimate for Theorem \ref{mainthm}.
	
	Moreover, one can use a similar argument of Harrington--Liu \cite{HL19} (see the last section) to prove a genuine estimate. Hence, Theorem \ref{mainthm} is proved.

	\bigskip
	\bigskip
	
	\noindent {\bf Acknowledgments}. The author thanks Dr. Phillip Harrington and Dr. Andrew Raich for fruitful conversations. 
	\printbibliography

\end{document}